\title[Birkhoff spectra]{Birkhoff spectrum for piecewise monotone interval maps}
\date{}
\author{Thomas Jordan}
\address{School of Mathematics, University of Bristol, University Walk, Bristol, BS8 1TW}
\email{thomas.jordan@bristol.ac.uk}
\author{Micha\l\  Rams }
\address{ Institute of Mathematics, Polish Academy of Sciences\\
 ul. \'Sniadeckich 8, 00-656 Warszawa, Poland}
\email{rams@impan.pl}
\thanks{The second author was supported by National Science Centre grant
2014/13/B/ST1/01033 (Poland). The work was finished during the fractals and dimension programme at the Mittag-Leffler institute, both authors wish to thank the institute for their hospitality}
\theoremstyle{plain}
\newtheorem{lem}{Lemma}[section]
\newtheorem{prop}[lem]{Proposition}
\newtheorem{thm}[lem]{Theorem}
\newtheorem{cor}[lem]{Corollary}
\theoremstyle{definition}
\theoremstyle{remark}
\numberwithin{equation}{section}
\DeclareMathOperator{\diam}{diam}
\DeclareMathOperator{\var}{var}
\DeclareMathOperator{\inte}{int}
\newcommand{\R}{\mathbb R}
\newcommand{\cal}{\mathcal}
\renewcommand{\epsilon}{\varepsilon}
\begin{document}
\maketitle
\begin{abstract}
For piecewise monotone interval maps we look at Birkhoff spectra for regular potential functions. This means considering the Hausdorff dimension of the set of points for which the Birkhoff average of the potential takes a fixed value. In the uniformly hyperbolic case we obtain complete results, in the case with parabolic behaviour we are able to describe the part of the sets where the lower Lyapunov exponent is positive. In addition we give some lower bounds on the full spectrum in this case. This is an extension of work of Hofbauer on the entropy and Lyapunov spectra.
\end{abstract}

\section{Introduction}

Consider a dynamical system $(X,T)$ and a potential function $\varphi:X\to \R$. For any point $x\in X$ we can define the Birkhoff average of $\varphi$ at $x$ by

\[
A(x, \varphi) = \lim_{n\to\infty} \frac 1n \sum_{i=0}^{n-1} \varphi(T^i x)
\]

wherever the limit exists. A natural question arises: given a potential function, what is the set of points with its Birkhoff average taking a prescribed value:

\[
L_a := \{x\in X; A(x, \varphi) = a\}.
\]

Usually one describes the size of this set in terms of either topological entropy or Hausdorff dimension, the functions $a \to h_{\rm top} L_a$ and $\alpha \to \dim_H L_a$ are called entropy and dimension Birkhoff spectra of the potential $\varphi$.

This kind of problems has been studied for quite some time, possibly starting with the work of Besicovitch and Knichal, \cite{B,K} on the frequency of digits in dyadic expansions which was subsequently expanded by Eggleston and Billingsley, \cite{E},\cite{Bi}. In dynamical systems the Birkhoff spectrum was initially investigated by Pesin and Weiss (\cite{PW}, Feng-Lau-Wu \cite{FLW} and Olsen, \cite{O}.  Several other authors investigated the case where Hausdorff dimension is replaced by topological entropy including Fan, Feng and Wu, Takens and Verbitskiy and Feng and Huang \cite{FFW, TV, FH} . 

Looking at the definition one does not expect the Birkhoff spectrum to be any nicer than a measurable function. However, it turned out that for a H\"older continuous potential function on a $C^{1+\beta}$ conformal repeller the entropy Birkhoff spectrum is the Legendre transform of a suitable pressure function, thus real analytic and concave.The dimension Birkhoff spectrum is more complicated, the exception being the Lyapunov spectrum, that is the Birkhoff spectrum for the potential $\log |T'|$ ,and  the cases where the Lyapunov spectrum is trivial. In these cases the Hausdorff Birkhoff spectrum can be deduced from the entropy Birkhoff spectrum as was done by Pesin and Weiss and by Weiss, \cite{PW,W}. In the general situation, however, the dimension Birkhoff spectrum formula for conformal repellers was considered by Feng, Lau and Wu \cite{FLW} where they gave a conditional variational principe. Barreira and Saussol considered the case of H\"{o}lder continuous potential functions where they gave a formula in terms of pressure and its derivative with which they showed the spectrum is analytic but in general not concave. 

Naturally the results for one dimensional repellers were afterwards generalised for more general classes of systems, in particular ones with non-uniformly hyperbolic behaviour. We should mention here several works including, \cite{N,Oli,KS,GR, IT,GPR} for the Lyapunov spectrum and \cite{C,JJOP,CTa,IJ1,IJ2} for the Birkhoff spectrum. Of special interest for us is the paper \cite{H2} by Franz Hofbauer. In this paper he constructs the Lyapunov spectrum and entropy Birkhoff spectrum for piecewise monotone maps of the interval. 

The aim of our paper is to extend the Hofbauer's result by constructing the dimension Birkhoff spectrum for piecewise monotone maps. We are not able to completely solve the problem, we are only able to describe the intersection of the sets $L_a$ with the set $D_0$ of points with positive lower Lyapunov exponent. The formulation of our results is rather complicated and depends on the notation yet to be introduced, so we postpone it till section 3. Before that, we describe in some details the theory of piecewise monotone maps (section 2) and of the Birkhoff spectra (section 3).

\section{Piecewise monotone maps}

The theory of piecewise monotone maps was created by Hofbauer, see in particular \cite{H1} for the general theory and \cite{H2} for selected facts, relevant for multifractal formalism. In this section we will define the objects of our study and state, without proofs, the properties we are going to use.

We consider a piecewise monotonous map, $T:[0,1]\to [0,1]$ based on subintervals of the form $C_1=[0=c_1,c_2], C_2=[c_2,c_3],\ldots,C_n=[c_n,c_{n+1}=1]$. We assume that $T$ is continuous and monotonous on each open interval $(c_{i-1},c_i)$ but not necessarily continuous at the endpoints $c_i$. We will denote by $Z$ the grand orbit of the set $\{c_1,\ldots,c_{n+1}\}$, it is a countable set. 

Systems of this type can exhibit several unwelcome types of behaviour, including (but not restricted to) intervals of fixed points. To avoid those we will only consider the restriction of the full system to a fixed set $A\subset [0,1]$ with following properties:

\begin{itemize}
\item $A$ is closed,
\item $A$ is completely invariant, i.e. $A=T^{-1}(T(A)) \supset T(A)$,
\item $T|_A$ is topologically transitive,
\item the partition $(C_i)_1^n$ is generating,
\item the topological entropy $h_{\rm top}(T|_A)$ is positive.
\end{itemize}

The dynamic on $A\setminus Z$ is topologically conjugated to dynamic of the shift map on a subset of the full (one-sided) shift space with $n$ symbols. The sets $C_i$ are the first level cylinders. In general this dynamics is not of a subshift of finite type.

We call a function $f:[0,1]\to\R$ {\it regular} if at each point $x\in [0,1)$ there exists the right limit $f_+(x)=\lim_{y\to x^+} f(y)$, at each point $x\in (0,1]$ there exists the left limit $f_-(x)=\lim_{y\to x^-} f(y)$, and if for all $x\in (0,1)$ $f(x) = \frac 1 2 (f_-(x)+f_+(x))$. We will also assume that the function $\phi=\log |T'|$ is regular except possibly at $Z$.

Given $x\in [0,1]$, we denote the {\it Birkhoff average} of $f$ at $x$ by

\[
A(x,f) = \lim_{k\to\infty} \frac 1 k \sum_{i=0}^{k-1} f\circ T^i(x)
\]
if the limit exists. We denote

\[
L_a = \{x\in A; A(x,f)=a\}.
\]

We will call $A(x,\phi)$ the {\it Lyapunov exponent} and denote it by $\chi(x)=A(x,\phi)$. We will also define the upper Lyapunov exponent

\[
\overline{\chi}(x) = \limsup_{k\to\infty} \frac 1 k \sum_{i=0}^{k-1} \phi\circ T^i(x)
\]

and the lower Lyapunov exponent

\[
\underline{\chi}(x) = \liminf_{k\to\infty} \frac 1 k \sum_{i=0}^{k-1} \phi\circ T^i(x).
\]
The set of points with lower Lyapunov exponent larger than $\alpha$ will be denoted $D_\alpha$. For an invariant measure $\mu$ we will denote the Lyapunov exponent of the measure by
$$\lambda(\mu)=\int \phi d\mu.$$
It is shown by Hofbauer and Raith in \cite{HR} that for an $T$-invariant ergodic measure with positive entropy,
$$\dim_H\mu=\frac{h(\mu)}{\lambda(\mu)}.$$

We need to present in few words the definition of topological pressure we are going to use. This definition comes from \cite{H2}. We first define {\it Markov set} as a nonempty closed set $B\subset [0,1]$ for which there exists a partition $\mathcal Y$ with respect to which $T$ is piecewise monotone and such that $T(B\cap Y)\subset B$ and that either $T(B\cap Y_1)\cap Y_2=\emptyset$ or $T(B\cap Y_1) \supset B\cap Y_2$ for all $Y, Y_1, Y_2\in \cal Y$. We define by $\mathcal M(A)$ the set of all Markov subsets of $A$. We note two important properties of Markov subsets of topologically transitive positive entropy sets, see \cite{H2}.

\begin{prop} \label{supersyst}
If $A$ is topologically transitive, completely invariant and has positive entropy then ${\mathcal M}(A)$ is nonempty. Moreover, for every two Markov sets $B_1, B_2 \in {\mathcal M}(A)$ we have $B_3 \in {\cal M}(A), B_3 \supset B_1 \cup B_2$.
\end{prop}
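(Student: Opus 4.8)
The plan is to derive both assertions from the combinatorics of Hofbauer's Markov diagram (the ``tower'') of $T|_A$. Recall the construction: from the generating partition $\mathcal P=(C_i)_1^n$ one forms the refinements $\mathcal P_k=\bigvee_{i=0}^{k}T^{-i}\mathcal P$; for a cell $D\in\mathcal P_k$ the closure $\cl(T^kD)$ is a closed interval, and the countable family $\mathcal D$ of all such intervals (over all $k\ge 0$ and all $D$) is organised into a directed graph, with an edge $I\to J$ precisely when $J=\cl(T^{k+1}D')$ for a level-$(k+1)$ cell $D'\subset D$ with $\cl(T^kD)=I$. The resulting countable-alphabet Markov shift $\widehat A$ is semiconjugated to $T|_A$ off the countable set $Z$ by a finite-to-one projection $\pi$; this is the standard machinery of \cite{H1,H2}. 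The point we use repeatedly is that a \emph{finite}, strongly connected set $V$ of vertices that is closed under outgoing edges produces a Markov set: take $B$ to be the $\pi$-image of the set of points of $\widehat A$ whose itinerary visits only vertices of $V$, and $\mathcal Y$ the partition generated by the finitely many cells seen on leaving the vertices of $V$; then $T$ is piecewise monotone for $\mathcal Y$, one has $T(B\cap Y)\subset B$ because $V$ is edge-closed, and the dichotomy ``$T(B\cap Y_1)\cap Y_2=\emptyset$ or $T(B\cap Y_1)\supset B\cap Y_2$'' is exactly the defining property of the edges of $\mathcal D$.

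\emph{Nonemptiness.} Since $\pi$ is finite-to-one, $h_{\rm top}(T|_A)>0$ makes the Gurevich entropy of $\widehat A$ positive, hence (Gurevich's theorem) its transition graph contains a finite strongly connected subgraph; by the previous paragraph this is a Markov set, so $\mathcal M(A)\ne\emptyset$. Even more crudely it suffices to take a periodic orbit of $T$ avoiding $Z$ — such orbits exist because positive entropy forces horseshoes — and to check directly that it is Markov for any interval partition isolating its points from $\{c_1,\dots,c_{n+1}\}$.

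\emph{Joining $B_1$ and $B_2$.} Let $B_1,B_2\in\mathcal M(A)$ carry piecewise monotone partitions $\mathcal Y_1,\mathcal Y_2$. We first realise both inside one diagram: each $B_i$ is, in its $\mathcal Y_i$-coding, essentially a subshift of finite type, so the standard bookkeeping of \cite{H1,H2} presents $B_1$ and $B_2$ as finite edge-closed vertex sets $V_1,V_2$ of the Markov diagram built over the common refinement $\mathcal Q=\mathcal P\vee\mathcal Y_1\vee\mathcal Y_2$. Now we invoke transitivity of $T|_A$: for small open sets $U$ meeting $B_1$ and $U'$ meeting $B_2$ there are $m,m'$ with $T^m U\cap U'\ne\emptyset$ and $T^{m'}U'\cap U\ne\emptyset$, and reading these off in the diagram yields admissible finite paths from a vertex of $V_1$ to a vertex of $V_2$ and back. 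Adjoining to $V_1\cup V_2$ the finitely many vertices met by these two paths produces a finite strongly connected edge-closed $V\supset V_1\cup V_2$, and the Markov set $B_3$ attached to $V$ satisfies $B_3\supset B_1\cup B_2$ and $B_3\in\mathcal M(A)$; it is closed because $V$ is finite.

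\emph{Main obstacle.} The real work lies not in the combinatorics — extracting a finite strongly connected subgraph in the positive-entropy case, or connecting $V_1$ to $V_2$ by shadowing transitive orbit segments — but in setting up the framework so that it applies: making $B_1$ and $B_2$ simultaneously visible as finite edge-closed pieces of one diagram (a refinement of a Markov partition need not leave the set Markov, because boundary points of cells interfere, so one is forced into the ``tower''), and controlling the countably many orbits that meet $Z$ so that discontinuities of $T$ spoil neither the semiconjugacy $\pi$ nor the covering relations. These technical points are at the heart of Hofbauer's theory; we only sketch them, referring to \cite{H1,H2}.
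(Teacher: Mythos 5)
The paper itself gives no proof of Proposition \ref{supersyst}: it is quoted from Hofbauer \cite{H2} (``see \cite{H2}''), so your Markov-diagram sketch follows the same route as the cited source rather than an alternative one. As a proof attempt, however, it has concrete problems. First, the edge-closedness requirement is both unnecessary and unavailable: if $B$ is the $\pi$-image of the set of paths that stay in a finite vertex set $V$ forever, then $T(B\cap Y)\subset B$ is automatic (the shifted path still stays in $V$), whereas a finite strongly connected subgraph of the tower is essentially never closed under outgoing edges inside the full diagram --- vertices typically have edges leaving any finite set. So your nonemptiness step and your joining step both invoke a hypothesis (edge-closure) that the subgraphs you actually construct do not satisfy; the argument is repairable by dropping edge-closure and working with paths confined to $V$, but as written it is internally inconsistent. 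Second, $\pi$ is not finite-to-one (fibres over points of $A\setminus Z$ are in general countably infinite); the equality of the Gurevich entropy of the diagram with $h_{\rm top}(T|_A)$ comes from Hofbauer's lifting theorem for measures of positive entropy, and that is what should be invoked. Third, the ``even more crude'' alternative does not work as stated: a horseshoe for $T$ forced by positive entropy need not meet $A$, so the periodic orbit you extract need not lie in $A$ and hence need not belong to ${\mathcal M}(A)$; to find periodic orbits inside $A$ you are driven back to the diagram of $T|_A$ anyway.

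The genuinely hard point, which you name but then defer wholesale to \cite{H1,H2}, is the claim that two arbitrary Markov sets $B_1,B_2$ with unrelated partitions ${\cal Y}_1,{\cal Y}_2$ can be realised simultaneously as finite vertex sets of one Markov diagram over a common refinement, in such a way that the associated confined-path sets recover $B_1$ and $B_2$ exactly; this is exactly where refining a partition can destroy the Markov property and where orbits through $Z$ must be controlled, and the containment $B_3\supset B_1\cup B_2$ rests entirely on it. Given that the paper also only cites \cite{H2} here, presenting this as a sketch is defensible, but the three misstatements above should be corrected, and the realisation step should be pinned to the specific lemmas of \cite{H2} rather than described as standard bookkeeping.
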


The pressure of a potential $f$ on $A$ is then defined as

\[
P(f) = \sup_{B\in \cal M(A)} \sup_{h\in C([0,1]); h\leq f} P(T|_B,h),
\]
where $P(T|_B, h)$ is the usual topological pressure of a continuous potential on a subshift of finite type.

An important result of Hofbauer and Urba\'nski \cite{HU} shows that this definition of pressure agrees with the construction of conformal measures. Denote 
\[
C_{i_1 i_2 \ldots i_k} = C_{i_1} \cap T^{-1}(C_{i_2}) \cap \ldots \cap T^{-k+1}(C_{i_k}),
\]
those sets will be called $k$th level {\it cylinders}. We call a probabilistic measure $\mu$ supported on $A$ {\it conformal} for potential $f$ if for every $k$th level cylinder $C_{i_1 i_2 \ldots i_k}$ we have
\[
\mu(T(C_{i_1 i_2 \ldots i_k})) = \int_{C_{i_1 i_2 \ldots i_k}} e^{P(f)-f} d\mu.
\]
As the partition is generating, this formula can be iterated:

\[
\mu(T^k(C_{i_1 i_2 \ldots i_k})) = \int_{C_{i_1 i_2 \ldots i_k}} e^{kP(f)-S_k f} d\mu,
\]
where $S_k f(x) = f(x) + f(T(x))+ \ldots + f(T^{k-1}(x))$.


\begin{prop} \label{prop:conformal}
Let $A$ be topologically transitive, completely invariant and with positive entropy. Then for every regular potential there exists a conformal measure, positive on open sets.
\end{prop}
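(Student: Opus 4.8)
The plan is to realise the conformal measure as a weak-$*$ limit of conformal measures on Markov subsystems. By Proposition \ref{supersyst}, $\mathcal M(A)$ is nonempty and any two of its members sit inside a third, so I fix an increasing sequence $B_1\subset B_2\subset\cdots$ of transitive Markov subsets of $A$ with $\bigcup_m B_m$ dense in $A$ (using transitivity and positivity of $h_{\rm top}(T|_A)$, one can even arrange that every cylinder $C_{i_1\cdots i_k}$ meeting $A$ meets some $B_m$). Through its defining partition, each $T|_{B_m}$ is topologically conjugate to a one-sided subshift of finite type on finitely many symbols, so on it classical thermodynamic formalism applies. Setting $P_m=\sup\{P(T|_{B_m},h):h\in C([0,1]),\ h\le f\}$, I produce, for each $m$, a probability measure $\mu_m$ carried by $B_m$ that is conformal for $f$ with exponent $P_m$ on the Markov cylinders of $B_m$ --- either directly, by a Schauder--Tychonoff fixed point for the normalised dual transfer operator on the weak-$*$ compact convex set of probabilities on $B_m$ (here the regularity, hence boundedness, of $f$ is what makes the operator well behaved), or by approximating $f$ from below by continuous potentials and invoking the Ruelle--Perron--Frobenius theorem. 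Monotonicity of the pressure in the Markov set gives $P_m\uparrow P(f)$.

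Next I extract a weak-$*$ subsequential limit $\mu$ of $(\mu_m)$; since each $\mu_m$ is carried by the closed set $A$, so is $\mu$. On $B_m$ the conformality relation is in essence a statement about the Jacobian of $T$ on sets where $T$ acts injectively, so it extends from Markov cylinders to every Borel subset of $B_m$ on which some iterate $T^k$ is one-to-one, in particular to the original cylinders $C_{i_1\cdots i_k}\cap B_m$ (on which $T^k$ is injective, as $C_{i_1\cdots i_k}\subset C_{i_1}$ is mapped monotonically into $C_{i_2}$, then into $C_{i_3}$, and so on). Hence $\mu_m\bigl(T^k(C_{i_1\cdots i_k}\cap B_m)\bigr)=\int_{C_{i_1\cdots i_k}}e^{kP_m-S_kf}\,d\mu_m$ for every original cylinder and every $m$. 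The goal is to pass to the limit and obtain $\mu\bigl(T^k(C_{i_1\cdots i_k})\bigr)=\int_{C_{i_1\cdots i_k}}e^{kP(f)-S_kf}\,d\mu$, which is precisely conformality of $\mu$ for $f$.

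The point I expect to be the main obstacle is exactly this limit passage, since $\mathbf 1_{T^k(C_{i_1\cdots i_k})}$ and $e^{kP_m-S_kf}\mathbf 1_{C_{i_1\cdots i_k}}$ are discontinuous precisely on the countable set $Z$ of cylinder endpoints and its grand orbit, so weak-$*$ convergence alone does not line up the two sides. I would remove the obstruction by showing that $\mu$ charges no point of $Z$: an atom at a point whose forward orbit hits some $c_i$, or at a periodic point, would --- via the iterated conformality relation --- force an exceptional identity between $P(f)$ and the Birkhoff sums of $f$ along that orbit, which one can exclude (alternatively, one derives a uniform estimate $\sup_x\mu_m(\{x\})\to 0$ from $h_{\rm top}(T|_A)>0$). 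Once $Z$ is $\mu$-null the cylinders are $\mu$-continuity sets, and the regularity convention $f(x)=\tfrac12(f_-(x)+f_+(x))$ makes both integrands $\mu$-almost-everywhere continuous, so the identity does pass to the limit; finally, $\mu$ being conformal, non-atomic and carried by the transitive set $A$, standard mass-spreading through the conformality relation shows $\mu$ is positive on every open set meeting $A$. Much of the subshift-of-finite-type input and of this limiting analysis is already contained in the work of Hofbauer and Urba\'nski \cite{HU}.
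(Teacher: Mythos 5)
The paper does not prove this proposition at all: it is quoted from Hofbauer--Urba\'nski \cite{HU} (the surrounding text states the properties of Section 2 are given ``without proofs''), so the comparison has to be with that construction rather than with an in-paper argument. Measured against it, your sketch has a genuine gap at exactly the step you flag as the main obstacle, but the obstacle is not the one you identify. The relation you can actually extract on $B_m$ is $\mu_m\bigl(T^k(C_{i_1\cdots i_k}\cap B_m)\bigr)=\int_{C_{i_1\cdots i_k}\cap B_m}e^{kP_m-S_kf}\,d\mu_m$, where the set on the left is the image \emph{inside the subsystem}. A Markov set is only forward invariant; it is not completely invariant, so $T^k(C\cap B_m)$ is in general a proper subset of $T^k(C)\cap B_m$, and it varies with $m$. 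Consequently, even after you know $\mu(Z)=0$, so that $T^k(C)$ and $C$ are $\mu$-continuity sets, weak-$*$ convergence does not give $\mu_m\bigl(T^k(C\cap B_m)\bigr)\to\mu\bigl(T^k(C)\bigr)$: you would also need $\mu_m\bigl(T^k(C)\setminus T^k(C\cap B_m)\bigr)\to 0$, i.e.\ that no limiting mass sits on points of $T^k(C)$ whose $C$-preimages all escape $B_m$, and nothing in your argument controls this. Avoiding this moving-target problem is precisely why the construction in \cite{HU} is carried out on the Markov extension (Hofbauer tower), where the partition is genuinely Markov for the full dynamics, rather than on an exhausting family of Markov subsets of $A$.

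There are secondary gaps as well. The Schauder--Tychonoff route needs the normalised dual transfer operator to be weak-$*$ continuous, which fails when $f$ and the branch domains are discontinuous; this can be repaired on the symbolic model because a regular function is a uniform limit of step functions, but then you are again taking a limit of conformal measures and the same limit-passage issue reappears (now with errors $e^{\pm k\varepsilon}$ to be removed). The exclusion of atoms on $Z$ is asserted (``which one can exclude'') rather than proved, and the alternative claim that $\sup_x\mu_m(\{x\})\to 0$ follows from $h_{\rm top}(T|_A)>0$ is not a valid deduction, since the Gibbs constants of the $\mu_m$ are not uniform in $m$. Finally, positivity on open sets by ``mass spreading'' requires forward images of arbitrary cylinders of $A$ to become uniformly large, which is false for general piecewise monotone maps --- that failure is exactly why the paper introduces the exceptional sets $N_d$ and why Lemma \ref{lem:mark} is stated only for Markov subsets. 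So the strategy is reasonable in outline, but as written the key limiting step and the supporting claims are not justified; the honest course here is to cite \cite{HU}, as the paper does.
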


\section{Thermodynamical formalism and Birkhoff spectra}

We will present the results of Barreira and Saussol and Feng, Lau and Wu calculating the Hausdorff dimension Birkhoff spectrum for interval expanding maps  and then we will proceed to Johansson, Jordan, Pollicott and \"Oberg's generalisation to the nonuniformly hyperbolic situation.

Let $T$ be a uniformly hyperbolic $C^{1+\beta}$ map on the interval $I$. Let $f:I\to \R$ be a continuous potential. We denote

\[
P(q,a,\delta) :=P(q(f-a)-\delta \phi)
\]

For $a\in \text{int} H$ $P(-\infty, \delta)=P(\infty, \delta)=\infty$ for all
$\delta$. Hence, there exists $P_{\rm min}(\delta) = \inf_q P(q, a,\delta)$. In the case when $f$ is H\"{o}lder continuous these pressure functions will be analytic. Moreover in the uniformly hyperbolic situation  as $\phi$ is bounded away from zero, so is $\partial P(q,a,\delta)/\partial\delta$. Hence, one can find $\delta_0=\delta_0(a)$ for which $P_{\rm min}(\delta_0)=P(q_0,a,\delta_0)=0$. The main result of \cite{BS} and \cite{FLW} is the following

\begin{prop}
For $a\in H$
\begin{equation}\label{cvp}
\dim_H L_a =\max\left\{\frac{h(\mu)}{\lambda(\mu)}:\int f d\mu =a\text{ and $\mu$ $T$-invariant}\right\}.
\end{equation}
If additionally $f$ is H\"{o}lder continuous and $a\in\rm{int} (H)$ then
\begin{equation}\label{delta}
\max \left\{\frac{h(\mu)}{\lambda(\mu)}:\int f d\mu =a\text{ and $\mu$ $T$-invariant}\right\}=\delta_0(a)
\end{equation}
\end{prop}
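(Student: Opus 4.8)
The plan is to prove the two displayed formulas separately, the first being a conditional variational principle valid on all of $H$ and the second an identification of that supremum with $\delta_0(a)$ in the interior under the H\"older hypothesis. For equation \eqref{cvp} I would first establish the lower bound: given any $T$-invariant $\mu$ with $\int f\,d\mu = a$, one uses ergodic decomposition to reduce to ergodic $\mu$, and then if $\lambda(\mu)>0$ the Birkhoff and Shannon--McMillan--Breiman theorems show that $\mu$-a.e.\ point lies in $L_a$, while the Hofbauer--Raith formula $\dim_H \mu = h(\mu)/\lambda(\mu)$ quoted in the excerpt gives $\dim_H L_a \ge h(\mu)/\lambda(\mu)$; in the uniformly hyperbolic setting $\lambda(\mu)$ is automatically bounded below away from $0$, so the edge case does not arise. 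For the upper bound I would construct, for each $\delta > \delta_0(a)$, a covering argument: since $P_{\min}(\delta) = \inf_q P(q(f-a)-\delta\phi) < 0$, pick $q$ with $P(q(f-a)-\delta\phi)<0$ and use the associated conformal (or equilibrium) measure together with the identity $\mu(T^k C_{i_1\ldots i_k}) = \int_{C_{i_1\ldots i_k}} e^{kP(f)-S_kf}\,d\mu$ to bound the sum of $(\diam C_{i_1\ldots i_k})^\delta$ over cylinders that meet $L_a$; the Birkhoff condition $\frac1k S_k f \approx ka$ makes the $q(f-a)$ term negligible and forces the $\delta$-dimensional Hausdorff measure of $L_a$ to be $0$, giving $\dim_H L_a \le \delta_0(a)$. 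Combined with the lower bound and the (standard, in the uniformly hyperbolic case) fact that $\delta_0(a)$ is itself realised by an equilibrium measure, this proves both \eqref{cvp} and \eqref{delta} simultaneously.

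In more detail, for \eqref{delta} the cleaner route is to work with the two-variable pressure $P(q,a,\delta)$ directly. Fix $a\in\operatorname{int}(H)$; then for each $\delta$ the function $q\mapsto P(q,a,\delta)$ is real analytic, strictly convex unless $f-a$ is cohomologous to a constant (and if it is, $L_a$ is trivial to handle), and tends to $+\infty$ as $q\to\pm\infty$ because $a$ is interior to the range of Birkhoff averages. Hence $P_{\min}(\delta) = P(q_0(\delta),a,\delta)$ is attained at a unique critical point $q_0(\delta)$, and since $\partial_\delta P = -\int \phi\,d\mu_{q,\delta}$ is bounded away from $0$ uniformly (as $\phi = \log|T'|$ is bounded below by a positive constant), $\delta\mapsto P_{\min}(\delta)$ is strictly decreasing, so $\delta_0(a)$ is the unique zero. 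At $\delta=\delta_0(a)$ let $\mu_0$ be the equilibrium state for $q_0(f-a)-\delta_0\phi$: the first-order conditions $\partial_q P = 0$ and $P = 0$ give exactly $\int f\,d\mu_0 = a$ and $h(\mu_0) = \delta_0\lambda(\mu_0)$, i.e.\ $h(\mu_0)/\lambda(\mu_0) = \delta_0(a)$, so this measure achieves the maximum in \eqref{cvp}. The reverse inequality $h(\mu)/\lambda(\mu)\le\delta_0(a)$ for every $\mu$ with $\int f\,d\mu = a$ follows from the variational principle: $0 = P_{\min}(\delta_0) \ge P(q(f-a)-\delta_0\phi) \ge h(\mu) + q(\int f\,d\mu - a) - \delta_0\lambda(\mu) = h(\mu)-\delta_0\lambda(\mu)$ for every $q$, hence in particular.

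The main obstacle I anticipate is the upper bound in \eqref{cvp}, i.e.\ showing $\dim_H L_a \le \sup\{h(\mu)/\lambda(\mu) : \int f\,d\mu = a\}$. The subtlety is that points in $L_a$ need not be generic for \emph{any} measure with mean $a$: a Birkhoff average can be built from longer and longer stretches spent near different measures, so one cannot simply invoke the Hofbauer--Raith formula in the other direction. The standard fix is a direct covering argument at each inverse temperature $q$, as sketched above, which converts the entropy-level statement into a dimension-level one by weighting cylinders with $e^{-\delta S_k\phi}$; the point where one must be careful is uniformity of the distortion of $S_k\phi$ over a cylinder (the bounded distortion property of $C^{1+\beta}$ uniformly hyperbolic maps) and the passage $\delta\downarrow\delta_0(a)$. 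In the uniformly hyperbolic $C^{1+\beta}$ setting these are classical, which is why the proposition holds in the clean form stated; the genuine difficulties the paper addresses arise only once hyperbolicity is dropped, which is outside the scope of this particular statement.
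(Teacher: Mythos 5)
Your treatment of \eqref{delta} is essentially the paper's own argument: locate the critical point $q_0$ of $q\mapsto P(q,a,\delta_0)$, use the equilibrium state $\mu_0$ for $q_0(f-a)-\delta_0\phi$ to get $\int f\,d\mu_0=a$ and $h(\mu_0)=\delta_0\lambda(\mu_0)$, and use the variational principle to get $h(\mu)/\lambda(\mu)\le\delta_0(a)$ for every invariant $\mu$ with mean $a$; that part is fine. The divergence is in \eqref{cvp}, where the paper merely defers to \cite{FLW} ("adapt the classical variational principle"), while you sketch an actual proof — and there your argument has a genuine gap on each side. For the lower bound, the step "use ergodic decomposition to reduce to ergodic $\mu$" fails: if $\mu$ is invariant but not ergodic with $\int f\,d\mu=a$, its ergodic components in general do not satisfy $\int f\,d\mu_x=a$ (they only average to $a$), and $h(\mu)/\lambda(\mu)$ is a mediant of the component ratios, so it is not bounded by the ratios of whichever components happen to have mean $a$ — there may be none at all. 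The generic-point/\cite{HR} argument therefore only yields $\dim_H L_a\ge h(\nu)/\lambda(\nu)$ for \emph{ergodic} $\nu$ with $\int f\,d\nu=a$; passing to arbitrary invariant $\mu$ requires either a Moran-type construction gluing several ergodic measures (as in \cite{FLW}, or as this paper does later with $w$-measures), or a pressure argument producing an ergodic measure with mean exactly $a$ and ratio at least $h(\mu)/\lambda(\mu)$.

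On the upper bound, your covering argument gives $\dim_H L_a\le\delta_0(a)$ with $\delta_0$ as in \eqref{deltadef}; to conclude \eqref{cvp} you still need $\delta_0(a)\le\sup\left\{h(\mu)/\lambda(\mu):\int f\,d\mu=a\right\}$, together with attainment of the supremum. You obtain this only from the claim that $\delta_0(a)$ is realised by an equilibrium measure, which relies on differentiability/analyticity of $q\mapsto P(q,a,\delta)$ and uniqueness of equilibrium states — available for H\"older $f$ and $a\in\mathrm{int}(H)$, i.e.\ exactly the hypotheses of \eqref{delta}. But \eqref{cvp} is asserted for merely continuous $f$ and for all $a\in H$, including the endpoints, where the critical point $q_0$ need not exist and equilibrium states for $q(f-a)-\delta\phi$ need not be unique, ergodic or Gibbs. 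Bridging $\delta_0(a)$ and the measure-theoretic supremum in that generality is precisely where a separate argument is required (step-function approximation and Markov subsystems as in the paper's Lemma \ref{lem:lowerint}, or the construction of \cite{FLW}); your proposal does not supply it, so as written it proves \eqref{cvp} only in the H\"older, interior case.
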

\begin{proof}
We will outline the details of the proof.  To show \eqref{cvp} ,as done in \cite{FLW} , the idea is to adapt the ideas of the proof the classical variational principle.

Note first that at $q_0$ the function $P(\cdot, a, \delta_0)$ has a minimum, hence $\partial P(q_0,a,\delta_0)/\partial q_0 =0$. At the same time,

\[
\frac {\partial P(q_0,a,\delta_0)} {\partial q_0} = \int (f-a) d\mu_{q_0, \delta_0},
\]

where $\mu_{q_0, \delta_0}$ is the unique ergodic equilibrium state for the potential $\psi = q_0(f-a) -\delta \phi$. Thus $\int f d\mu_{q_0,\delta_0}=a$ and 
\[
h(\mu_{q_0,\delta_0})+\int (f-a) d\mu_{q_0, \delta_0}-\delta\int\phi d\mu_{q_0,\delta_0}=0.
\]
Rearranging this gives that
$$\frac{h(\mu_{q_0,\delta_0})}{\int\phi d\mu_{q_0,\delta_0}}=\delta_0.$$
On the other hand for an arbitrary $T$-invariant probability measure $\mu$ with $\int f d\mu=a$ it follows by the variational principle that
\[
h(\mu_{q_0,\delta_0})+\int (f-a) d\mu_{q_0, \delta_0}-\delta\int\phi d\mu_{q_0,\delta_0}\leq 0.
\]
and so $\frac{h(\mu)}{\int\phi d\mu_{q}}\leq \delta_0$. Thus 
$$\max\left\{\frac{h(\mu)}{\lambda(\mu)}:\int f d\mu =a\text{ and $\mu$ $T$-invariant}\right\}=\delta_0(a).$$
\end{proof}

In the case where $f$ is continuous but not H\"{o}lder continuous, the pressure is not always differentiable so the definition of $\delta_0(a)$ given above may not work. However if instead we define
\begin{equation}\label{deltadef}
\delta_0(a)=\sup\{\delta:\inf\{P(q(f-a)-\delta\phi):q\in\R\}>0\}
\end{equation}
then it is possible to show that for $a\in H$,
$$\delta_0(a)=\max\left\{\frac{h(\mu)}{\lambda(\mu)}:\int f d\mu =a\text{ and $\mu$ $T$-invariant}\right\}.$$
As we will be working in the case where we certainly cannot assume the pressure is differentiable this is the definition we will use.

In the nonuniformly hyperbolic situation the picture is more complicated. Let now $T$ be $C^{1+\beta}$ expansive Markov map, but we allow the existence of parabolic  fixed points $p_1,\ldots, p_\ell$). We take a continuous function $f:[0,1]\to [0,1]$ as before and let $H_p$ be the convex hull of $\{f(p_1),\ldots,f(p_{\ell})\}$. In this case it is shown in \cite{JJOP} that if $a\in H_p$ then $\dim_H L_a=1$ and if $a\in (H\backslash H_p)$ then as before
$$\dim_H L_a=\max\left\{\frac{h(\mu)}{\lambda(\mu)}:\int f d\mu =a\text{ and $\mu$ $T$-invariant}\right\}=\delta_0(a)$$
where $\delta_0(a)$ is defined as in \eqref{deltadef}. However in $H_p$ it will not always be the case that $\delta_0(a)=1$. We will show that $\delta_0(a)$ is lower-semi continuous where as in \cite{IJ2} an example is given where $a\to \dim_H L_a$ is not lower-semi continuous. It turns out that $\delta_0(a)$ will give the hyperbolic dimension of $L_a$ (points with positive local Lyapunov exponent) but not always the   full Hausdorff dimension.

\subsection{The main result}

We now turn back to our more general setting.
Denote by $H$ the set of all $a\in\R$ for which there exists a sequence of non-atomic ergodic measures $\mu_n$, each supported on a Markov subset of $A$, such that $\int f d\mu_n \to a$. This is equivalent to the definition of $H$ given in \cite{H2}. By Proposition \ref{supersyst}, $H$ is a closed interval. While it is not known whether $A(x,f)\in H$ for all $x\in A$, Hofbauer proves in \cite{H2} that for all $a\notin H$ $h_{\rm top}L_a = 0$, and hence $\dim_H L_a\cap D_0=0$. In what follows we will restrict our attention to $a\in H$.

The set $H$ can be divided into two subsets. Let $H_p$ be the set of all $a\in H$ such that we can not only choose a sequence $\mu_n$ as in the definition of $H$, but in addition the measures $\mu_n$ can be chosen as to satisfy $\int \phi d\mu_n \to 0$. Then $H_h = H \setminus H_p$. The sets $H_p$ and $H_h$ will be called {\it parabolic} and {\it hyperbolic}.

By Lemma \ref{endpoints}, which will be proved in the next section, for $a\in\text{int}(H)$ it makes sense to define
$$\delta_0(a)=\sup\{\delta:\inf\{P(q(f-a)-\delta\phi):q\in\R\}>0\}.$$
We will also define
$$\dim_{\rm{hyp}}=\sup\{\dim M:\text{ $M$ is Markov and }M\subseteq A\}.$$
It can be also shown that this is both the supremum of dimensions of hyperbolic ergodic measures supported on Markov subsets of $A$ and the supremum of $\delta_0(a)$ where $a\in H$.
\begin{thm} \label{thm:main}
\begin{enumerate}
\item
If $a\in H$ then $\dim_H(L_a\cap D_0)=\delta_0(a)$. 
\item
For all $a\in \text{int}(H)$
$$\delta_0(a)=\sup\left\{\frac{h(\mu)}{\lambda(\mu)}:\mu\text{ $T$-ergodic and }\int f d\mu=a\right\}$$
\item
If $a\in H_p$ then $\dim_H(L_a)\geq \dim_{\text{hyp}}(A)$. Moreover if $a\in \rm{int}(H_p)$ then $\dim_H(L_a\cap D_0)=\dim_{\text{hyp}}(A)$.
\item
The function $a\to\dim_H L_a\cap D_0$ is continuous on $H_h$ and lower semi-continuous on $H$.
\end{enumerate}
\end{thm}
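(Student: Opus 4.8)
The plan is to establish the four statements largely in sequence, since each one feeds into the next. For part (2), the identity $\delta_0(a)=\sup\{h(\mu)/\lambda(\mu):\mu\ \text{$T$-ergodic},\ \int f\,d\mu=a\}$ for $a\in\operatorname{int}(H)$, I would argue both inequalities via the pressure function $P(q,a,\delta)=P(q(f-a)-\delta\phi)$. For ``$\geq$'', given an ergodic $\mu$ with $\int f\,d\mu=a$ and $\lambda(\mu)>0$, the variational principle for the Hofbauer pressure gives $h(\mu)-\delta\lambda(\mu)=h(\mu)+\int q(f-a)\,d\mu-\delta\int\phi\,d\mu\leq P(q,a,\delta)$ for every $q$; taking $\delta$ slightly less than $h(\mu)/\lambda(\mu)$ forces $\inf_q P(q,a,\delta)\leq 0$ to fail only when $h(\mu)/\lambda(\mu)$ is approached, which by the definition \eqref{deltadef} of $\delta_0(a)$ yields $h(\mu)/\lambda(\mu)\leq\delta_0(a)$. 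Wait---the direction I actually need here is the reverse; let me instead note that for $\delta<h(\mu)/\lambda(\mu)$ one has $P(q,a,\delta)\geq h(\mu)+q\cdot 0-\delta\lambda(\mu)>0$ for all $q$, hence $\delta\leq\delta_0(a)$, giving $h(\mu)/\lambda(\mu)\leq\delta_0(a)$; the supremum over such $\mu$ is then $\leq\delta_0(a)$. For ``$\leq$'', I would fix $\delta<\delta_0(a)$, so $\inf_q P(q,a,\delta)>0$; using that $a\in\operatorname{int}(H)$ guarantees $P(q,a,\delta)\to+\infty$ as $q\to\pm\infty$ (this is where Lemma \ref{endpoints} on the endpoints is used), the infimum is attained at some finite $q_*$, and by approximating the Hofbauer pressure from within by Markov subsystems (Proposition \ref{supersyst}) one obtains on a sufficiently large Markov set $B$ an equilibrium state $\mu$ for $q_*(f-a)-\delta\phi$ with $P(T|_B,\cdot)$-value arbitrarily close to $\inf_q P(q,a,\delta)>0$; the first-order condition $\partial_q P=0$ at $q_*$ forces $\int f\,d\mu\approx a$ and then $h(\mu)-\delta\lambda(\mu)>0$, i.e. $h(\mu)/\lambda(\mu)>\delta$. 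Letting $\delta\uparrow\delta_0(a)$ and cleaning up the approximations (using that $\lambda(\mu)$ stays bounded away from $0$ on a fixed Markov set) gives the claim.

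For part (1), $\dim_H(L_a\cap D_0)=\delta_0(a)$, the upper bound is the more routine half: every point of $L_a\cap D_0$ has a well-defined Birkhoff average $a$ for $f$ and positive lower Lyapunov exponent, so one covers $L_a\cap D_0$ by cylinders along which $S_kf/k\approx a$ and $S_k\phi/k$ is bounded below, and a conformal measure for the potential $q(f-a)-\delta\phi$ (Proposition \ref{prop:conformal}), for $\delta$ slightly above $\delta_0(a)$ and the optimal $q$, gives a covering argument showing the $\delta$-dimensional Hausdorff measure is finite; hence $\dim_H(L_a\cap D_0)\leq\delta_0(a)$. The lower bound is obtained by the standard construction of a Moran-type subset: for $\delta<\delta_0(a)$, part (2) supplies an ergodic hyperbolic measure $\mu$ (supported, after a further reduction, on a Markov subsystem) with $\int f\,d\mu=a$ and $h(\mu)/\lambda(\mu)>\delta$; by the Hofbauer--Raith formula $\dim_H\mu=h(\mu)/\lambda(\mu)$ quoted in Section 2, and since $\mu$-a.e. point lies in $L_a\cap D_0$, we get $\dim_H(L_a\cap D_0)\geq h(\mu)/\lambda(\mu)>\delta$; let $\delta\uparrow\delta_0(a)$. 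Some care is needed when $a\in\partial H$, where $\operatorname{int}(H)$ arguments are unavailable and one should instead approximate $a$ from inside $H$ and use the already-asserted lower semi-continuity, or argue directly with measures on Markov sets.

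Part (3): for $a\in H_p$, by definition one can choose non-atomic ergodic $\mu_n$ on Markov subsets with $\int f\,d\mu_n\to a$ and $\lambda(\mu_n)\to 0$. To get $\dim_H(L_a)\geq\dim_{\mathrm{hyp}}(A)$, fix a Markov set $M$ with $\dim M$ close to $\dim_{\mathrm{hyp}}(A)$ and an ergodic hyperbolic measure $\nu$ on $M$ of dimension close to $\dim M$; then interleave long blocks orbiting near $\nu$ (to build dimension) with increasingly long blocks pushing the finite-time average of $f$ toward $a$ (using the $\mu_n$ near the parabolic points, where $\phi$ is small so these blocks cost almost no Lyapunov exponent), in a w-measure-type or Moran construction whose limiting set lies in $L_a$ and carries dimension at least $\dim M-\epsilon$. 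For $a\in\operatorname{int}(H_p)$ the matching upper bound on $L_a\cap D_0$ follows from part (1) once one shows $\delta_0(a)=\dim_{\mathrm{hyp}}(A)$ there: since $a\in H_p$ there are measures with $\int f\,d\mu_n\to a$, $\lambda(\mu_n)\to 0$, so the pressure $P(q(f-a)-\delta\phi)\geq h(\mu_n)+q(\int f\,d\mu_n-a)-\delta\lambda(\mu_n)$ stays positive for every $\delta<\dim_{\mathrm{hyp}}(A)$ and every $q$, giving $\delta_0(a)\geq\dim_{\mathrm{hyp}}(A)$; the reverse inequality $\delta_0(a)\leq\dim_{\mathrm{hyp}}(A)$ holds for all $a$ because $\delta_0(a)$ is realized by dimensions of hyperbolic measures on Markov sets, as remarked before the theorem. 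Finally, part (4): lower semi-continuity of $a\mapsto\dim_H(L_a\cap D_0)=\delta_0(a)$ on $H$ is immediate from the definition \eqref{deltadef}, since $\delta_0$ is a supremum of a family of conditions each of which is open in $a$ (the pressure $P(q(f-a)-\delta\phi)$ depends continuously, indeed monotonically via the sup-norm $\|f-a\|$, on $a$); continuity on $H_h$ then follows by complementing with upper semi-continuity there, which one gets from the variational formula in part (2) together with upper semi-continuity of entropy and the fact that on $H_h$ the relevant measures have Lyapunov exponents bounded away from $0$, so no dimension can escape to the parabolic regime.

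The main obstacle I expect is the lower-bound construction underlying parts (1) and (3): one must build, inside the non-Markov ambient system $A$, Moran-type Cantor sets that simultaneously (a) sit inside $L_a$, requiring precise control of Birkhoff sums of the merely-continuous (not Hölder) potential $f$ along arbitrarily long concatenations, and (b) have Hausdorff dimension matching $\delta_0(a)$ resp. $\dim_{\mathrm{hyp}}(A)$, which forces careful bookkeeping of distortion and of the varying Lyapunov exponents of the building blocks---especially in part (3), where the parabolic blocks have $\lambda\to 0$ and the usual bounded-distortion estimates degenerate near the fixed points $p_i$. Controlling the error terms so that the ``wasted'' parabolic excursions do not depress the dimension below $\dim_{\mathrm{hyp}}(A)-\epsilon$ is the delicate technical heart of the argument.
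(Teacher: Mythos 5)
Your interior-of-$H$ architecture (conformal-measure covering for the upper bound, equilibrium states on approximating Markov subsystems plus the Hofbauer--Raith formula for the lower bound) matches the paper, but the proposal has genuine gaps. The most serious is the endpoint case of part (1). Your plan for $a\in\partial H$ --- ``approximate $a$ from inside $H$ and use the already-asserted lower semi-continuity'' --- cannot work: lower semi-continuity of $a\mapsto\dim_H(L_a\cap D_0)$ only says $\dim_H(L_a\cap D_0)\leq\liminf_{a'\to a}\dim_H(L_{a'}\cap D_0)$, so it gives no lower bound on the dimension of the particular level set $L_a$, and since the sets $L_{a'}$ are pairwise disjoint nothing transfers from nearby level sets; it is also circular, since part (4) concerns exactly the function whose value at $a$ is being computed. ``Argue directly with measures on Markov sets'' is likewise insufficient: at $a=\max H$ there need not exist ergodic measures with $\int f\,d\mu$ exactly $a$ and $h/\lambda$ close to $\delta_0(a)$ (part (2) is only available on $\mathrm{int}(H)$). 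The paper fills this with a substantive argument: along $q_i\to\infty$ it sets $a_i=a+\frac{d}{dq}P(a,q,\delta)\big|_{q=q_i}$, checks $P(a_i,q_i,\delta)>\gamma/2$ with $q_i$ the minimiser, applies Lemma \ref{lem:lowerint} to get ergodic $\mu_i$ with $\int f\,d\mu_i=a_i$, $h(\mu_i)/\lambda(\mu_i)\geq\delta$ and $\lambda(\mu_i)>\gamma/2$ uniformly, and then glues these by the $w$-measure construction (Theorem \ref{thm:w}) to produce a measure of dimension $\geq\delta$ carried by points with Birkhoff average exactly $a$ lying in $D_{\gamma/2}$. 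Nothing in your proposal plays this role. Relatedly, your part (2) construction only achieves $\int f\,d\mu\approx a$; since $f$ is merely regular, hitting $a$ exactly requires the step-function approximation, analyticity of the Markov pressure, and the weak-$*$/intermediate-value argument of Lemma \ref{lem:lowerint}, and this exactness is what your interior lower bound in part (1) rests on.

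Two further steps are asserted too quickly. First, in part (4), lower semi-continuity is not ``immediate from the definition'': $P(q(f-a')-\delta\phi)=P(q(f-a)-\delta\phi)+q(a-a')$, so the dependence on $a$ has modulus $|q|\,|a-a'|$, which is not uniform in $q$; hence $\{a:\inf_q P(q(f-a)-\delta\phi)>0\}$ is not obviously open, especially at endpoints of $H$ where the infimum may only be approached as $q\to\infty$. The paper's Proposition \ref{prop:upend} needs the monotonicity/compactness argument on the sets $\Delta_i^{\pm}$; and upper semi-continuity on $H_h$ is obtained not from ``upper semi-continuity of entropy'' (unclear in this discontinuous, non-Markov setting) but from Lemma \ref{lem:lowerint} together with the quantitative bound $P(a_i,q,\delta-\varepsilon)\geq\tfrac12\varepsilon\varepsilon'$. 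Second, in part (3) your claim that $P(q(f-a)-\delta\phi)\geq h(\mu_n)+q\bigl(\int f\,d\mu_n-a\bigr)-\delta\lambda(\mu_n)$ ``stays positive for every $q$'' using only the parabolic measures is false: those measures have $h(\mu_n)\leq\lambda(\mu_n)\to0$, so $h(\mu_n)-\delta\lambda(\mu_n)$ can be nonpositive, and the linear term is uncontrolled in $q$. The correct argument (the paper's corollary after Lemma \ref{lem:endpoints}) uses two non-atomic ergodic measures of dimension $>\dim_{\mathrm{hyp}}(A)-\epsilon$ with integrals on opposite sides of $a$, built by mixing a parabolic measure with a high-dimensional hyperbolic one and then applying the ergodic approximation of Lemma \ref{invapprox}; this is precisely where $a\in\mathrm{int}(H_p)$ enters. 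Your interleaving idea for $\dim_H L_a\geq\dim_{\mathrm{hyp}}(A)$ is in the spirit of Theorem \ref{thm:w}, but the items above are missing ideas rather than omitted routine details.
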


Note that we are using the convention that $\dim_H\emptyset=-\infty$ and that if an endpoint $a$ of $H$ is also in $H_p$ then it is possible that $\delta_0(a)=-\infty$ (in this case $L_a$ will be non-empty and have dimension at least the hyperbolic dimension but $L_a\cap D_0=\emptyset$).

The rest of the paper will be structured as follows. In section 4 we will proof the result on the dimension of $L_{a}\cap D_0$, in section 5 we will  prove the results for end points and for $H_p$ and in section 6 we will proof the continuity properties of the spectrum. Finally in section 7 we will discuss some examples.

\section{Dimension of $L_a\cap D_0$} 
\subsection{Preparation -- exceptional sets}

We would like to work with symbolic dynamics and then apply the results to the geometry of some invariant sets. The symbolic dynamics is not that complicated, we are working with closed invariant positive entropy subsets of a full shift. But the interplay between dynamics and geometry is rather involved.

We will work with more general partitions than the original partition $(C_i)$, assuming however that $T$ is piecewise monotone for them (hence, all the results of Section 2 hold for them), for example every subpartition of $(C_i)$ has this property.
Let $\cal Z$ be such a partition. Denote by $Z_0$ the partition points of $\cal Z$ and let $Z_k=T^{-1}(Z_{k-1})$. The elements of partition ${\cal Z}^k$ generated by points $Z_0\cup\ldots\cup Z_{k-1}$ will be called $k$-level cylinders.

If both endpoints of a $k$ level cylinder $I$ belong to $Z_1\cup\ldots\cup Z_{k-1}$ then $T(I)$ is a $k-1$ level cylinder. However, if one or both of the endpoints of $I$ belongs to $Z_0$ then $T(I)$ is in general only a subset of some $k-1$ level cylinder. We would like to get some bounds on how small can $|T^k(I)|$ be.

For the upper bound we will work with full system $(A,T)$ using part of the statement of \cite[Lemma 14, 15]{H2}. Given $x\in [0,1]$, let $Z_k(x)$ be the $k$ level cylinder containing $x$. Let $\mu$ be any probabilistic measure on $[0,1]$, positive on all cylinders (of any level) intersecting $A$. Given $d>0$ let

\[
N_d = \bigcup_{k=0}^\infty Z_k \cup \{x\in A; \limsup_{k\to\infty} \mu(T^k(Z_k(x))) < d\}.
\]
Denote also 

\[
\var_{\cal Z} f = \sup_{Z\in\cal Z} \sup_{x_1,x_2\in Z} |f(x_1)-f(x_2)|.
\]

We have 

\begin{lem} \label{lem:nd}
For every $\alpha>\var_{\cal Z} \phi$ we have
\[
\lim_{d\to 0} \dim_H (N_d\cap D_\alpha) =0
\]
\end{lem}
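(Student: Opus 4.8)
The plan is to control the set $\{x \in A : \limsup_{k\to\infty} \mu(T^k(Z_k(x))) < d\}$ by a covering argument, using the conformal-type estimate $\mu(T^k(Z_k(x))) = \int_{Z_k(x)} e^{kP(\phi) - S_k\phi}\, d\mu$ that is available via Proposition~\ref{prop:conformal} (applied to the potential $\phi = \log|T'|$, whose pressure $P(\phi)$ we can normalize; here $P(\phi)=0$ in the relevant reduction since we are computing Hausdorff dimension with $\delta=1$ as the target). First I would observe that the countable union $\bigcup_k Z_k$ has Hausdorff dimension $0$ (it is a countable set), so it contributes nothing and can be discarded. Thus it suffices to show that $\dim_H\big(E_d \cap D_\alpha\big) \to 0$ as $d\to 0$, where $E_d = \{x\in A : \limsup_k \mu(T^k(Z_k(x))) < d\}$.

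The second step is the geometric translation. On $D_\alpha$ — points with lower Lyapunov exponent at least $\alpha > \var_{\cal Z}\phi$ — the cylinder $Z_k(x)$ has length comparable (via bounded distortion, controlled by $\var_{\cal Z}\phi$) to $e^{-S_k\phi(x)} \, |T^k(Z_k(x))|$, and $|T^k(Z_k(x))|$ is bounded below once we know $\mu(T^k(Z_k(x)))$ is, because $\mu$ is positive on cylinders; but more to the point, on $E_d$ we have the reverse information $\mu(T^k(Z_k(x))) < d$ for infinitely many $k$, which by the conformal formula forces $\int_{Z_k(x)} e^{-S_k\phi}\, d\mu < d$, hence (bounded distortion again) $|Z_k(x)| \lesssim d^{1/?}$... the cleaner route is: the conformal measure $\nu$ of $\phi$ satisfies $\nu(Z_k(x)) \asymp e^{-S_k\phi(x)} \mu(T^k(Z_k(x)))$, so on $E_d$ we get $\nu(Z_k(x)) < d\, e^{-S_k\phi(x)} \asymp d\, |Z_k(x)|$ for infinitely many $k$. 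Comparing $\nu$ with Lebesgue-type dimension counting, a cylinder $Z_k(x)$ with $\nu(Z_k(x)) < d\,|Z_k(x)|$ has an unusually small $\nu$-mass relative to its size, and iterating the Borel--Cantelli / pressure-function estimate along the (infinitely many) good times $k$ bounds the dimension of $E_d\cap D_\alpha$ by a quantity tending to $0$ with $d$.

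The technical heart is a pressure estimate: I would set up, for each small $\delta>0$, a covering of $E_d \cap D_\alpha$ by cylinders $Z_k(x)$ (at the infinitely-many times $k$ where $\mu(T^k(Z_k(x)))<d$) and estimate $\sum |Z_k(x)|^s$ over such a cover. Using bounded distortion on $D_\alpha$ and the conformal identity, $\sum_{\text{good } Z} |Z|^s \lesssim \sum_{\text{good }Z} e^{-s S_k \phi} \lesssim d^{-1}\sum_{\text{good }Z} \nu(Z)\, e^{-(s-1)S_k\phi} \le d^{-1} e^{-(s-1)\alpha k}\sum_Z \nu(Z) = d^{-1} e^{-(s-1)\alpha k}$, which for $s<1$ is summable in $k$ only if... one must instead fix the cover at a single well-chosen scale $k=k(\text{cylinder})$ and use a stopping-time / Besicovitch-covering argument to make the geometric series in $k$ converge; the constraint $\alpha > \var_{\cal Z}\phi$ is exactly what guarantees the distortion constant is dominated and the exponent $(s-1)\alpha$ beats the combinatorial growth. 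Carrying this out yields $\dim_H(E_d\cap D_\alpha) \le s(d)$ with $s(d)\to 0$.

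The main obstacle I anticipate is precisely this stopping-time covering: the defining condition on $E_d$ is a $\limsup$, giving good estimates only at an unspecified, $x$-dependent infinite sequence of levels, so one cannot cover at a uniform scale. The standard fix — for each $x$ pick the first good level past a threshold $N$, group cylinders, and sum — requires care that the resulting family is essentially a partition (or has bounded overlap) so that $\sum \nu(Z) \le C$ genuinely holds; handling the cylinders touching $Z_0$ (where $T(I)$ is only a \emph{subset} of a lower cylinder, as flagged in the text) is the delicate point, and this is where Hofbauer's \cite[Lemma 14, 15]{H2} is invoked to bound from below how much such a cylinder can shrink, so that the distortion/covering bookkeeping still closes.
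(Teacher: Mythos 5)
There is a genuine gap, and it lies exactly where you flagged it. First, a structural point: the lemma is stated for an \emph{arbitrary} probability measure $\mu$ that is positive on cylinders meeting $A$ (in the application of Section 4.2 it is the conformal measure for $q_0(f-a)-\delta_0\phi-p$, not for $\phi$), so you may not assume the identity $\mu(T^k(Z_k(x)))=\int_{Z_k(x)}e^{kP(\phi)-S_k\phi}\,d\mu$; Proposition \ref{prop:conformal} produces \emph{some} conformal measure, it does not make the given $\mu$ conformal, and no comparison between $\mu$ and a $\phi$-conformal measure $\nu$ is available. Second, and more fundamentally, even granting conformality the information goes the wrong way for your covering scheme: $\mu(T^k(Z_k(x)))<d$ translates into the cylinder having \emph{small} $\mu$-mass relative to $e^{-S_k\phi}$, which is the opposite of the mass-distribution bound one needs, and your own computation exhibits this -- the bound $d^{-1}e^{-(s-1)\alpha k}=d^{-1}e^{(1-s)\alpha k}$ diverges as $k\to\infty$ for every $s<1$. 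The proposed repair (a stopping-time/Besicovitch cover at a single $x$-dependent scale) does not address this, because the divergence is not a bookkeeping artefact of covering at many scales: at \emph{any} single scale the conformal identity simply yields no upper bound on the number of such cylinders or on $\sum|Z|^s$. Note also that the hypothesis is a $\limsup$ strictly below $d$, i.e.\ the estimate holds for \emph{all} large $k$, which is stronger than the ``infinitely many $k$'' you use -- and this strengthening is what the real argument exploits.

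The mechanism that actually proves the lemma is combinatorial, not conformal, and this is why the paper does not prove it but quotes it from \cite[Lemmas 14, 15]{H2} (your closing appeal to those lemmas for a lower bound on $|T^k(I)|$ misattributes them; in this paper that role is played by Lemma \ref{lem:mark} for Markov sets). Since $\mu$ is positive on all cylinders meeting $A$ and there are finitely many $m$-level cylinders, set $c_m=\min\{\mu(C):C\in{\cal Z}^m,\ C\cap A\neq\emptyset\}>0$. If $d<c_m$ and $\mu(T^k(Z_k(x)))<d$, the interval $T^k(Z_k(x))$ contains no full $m$-cylinder, hence meets at most two adjacent ones; as $T^k$ is monotone on $Z_k(x)$, the cylinder $Z_k(x)$ has at most two refinements at level $k+m$. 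Applying this for all large $k$ shows the relevant part of $N_d$ is covered, at level $n$, by roughly $2^{n/m(d)}$ cylinders, i.e.\ its entropy is at most $(\log 2)/m(d)\to 0$ as $d\to 0$; on $D_\alpha$ each such $n$-cylinder has length at most about $e^{-n(\alpha-\var_{\cal Z}\phi-\epsilon)}$, and the hypothesis $\alpha>\var_{\cal Z}\phi$ is what makes this exponent positive, giving $\dim_H(N_d\cap D_\alpha)\lesssim \frac{\log 2/m(d)}{\alpha-\var_{\cal Z}\phi}\to 0$. Your sketch never identifies this counting step, and without it the argument does not close.
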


For the lower bound we need to work with Markov subsets. Symbolically they are just subshifts of finite type. We will need the following lemma.

\begin{lem} \label{lem:mark}
Let $(B, \cal Z)$ be a Markov set. Then there exists $K>0$ such that for every $k$ for every $k$th level cylinder $I$ with nonempty intersection with $B$ we have

\[
|T^k I| >K.
\]
Assume in addition that $T|B$ is topologically transitive. Then for every $x\in B$ there exist $r>0$ and $l>0$ such that for every $k$ for every $k$th level cylinder $I$ there exists interval $J\subset T^k(I)$ and an integer $j\leq l$ such that $T^j(J)=B(x,r)$ and $T^j$ is continuous and monotone on $J$.
\begin{proof}
Consider any $k$th level cylinder $I$ contained in a first level cylinder $J_0$ and intersecting $B$. We have $T(I)\subset I' \subset J_1$, where $I'$ is a $k-1$ level cylinder and $J_1$ is a first level cylinder. We have $I_1 \cap B \supset T(I)\cap B \neq\emptyset$, hence $J_1\cap B = T(J_0\cap B)$ by the definition of Markov set. Hence, the set $I'\cap B = B\cap T(I)$. Following the same reasoning, $T^k(I)$ is contained in some first level cylinder $J_2$ and $J_2\cap B = B\cap T^k(I)$. Then we can set $K$ as the minimum (over all first level cylinders $J$) of $|{\rm span} (J\cap B)|$ and the first part of the assertion is proved.

The second part follows easily because every open interval of a given size centred at a point in $A$ contains a (symbolic) cylinder of a bounded level.
\end{proof}
\end{lem}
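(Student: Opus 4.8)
\emph{The plan} is to prove the two assertions separately: the first is a purely symbolic consequence of the Markov dichotomy, while the second combines it with topological transitivity and with the fact that the partition is generating on $A$.

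\emph{First part.} Fix a $k$-level cylinder $I$ with $I\cap B\neq\emptyset$ and read off its symbolic name, so that $I=Q\cap T^{-1}(I')$ where $Q\in\mathcal Z$ is the first-level cylinder containing $I$ and $I'$ is the shifted $(k-1)$-level cylinder, itself contained in a first-level cylinder $Q'$. Since $I\cap B\neq\emptyset$ forces $T(B\cap Q)$ to meet $I'\subseteq Q'$, the Markov dichotomy gives $T(B\cap Q)\supseteq B\cap Q'\supseteq B\cap I'$; combined with $T(B\cap I)\subseteq B\cap I'$ this yields the one-step identity $T(B\cap I)=B\cap I'$. Iterating $k-1$ times gives $T^{k-1}(B\cap I)=B\cap Q''$ for a single first-level cylinder $Q''$, and one more step gives $T^k(B\cap I)=\bigcup(B\cap\widehat Q)$, a non-empty union over the first-level cylinders $\widehat Q$ with an allowed transition from $Q''$. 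A $k$-level cylinder contains no point of $Z_0\cup\cdots\cup Z_{k-1}$ in its interior, so $T^j$ is continuous and monotone on $I$ for every $j\leq k$; hence $T^k(I)$ is an interval, it contains $T^k(B\cap I)$, and therefore it contains $\{\rm span\}(B\cap\widehat Q)$ for at least one $\widehat Q$. Taking $K$ to be half the smallest of the finitely many lengths $|{\rm span}(B\cap Q)|$ over first-level cylinders $Q$ meeting $B$ finishes the first part; the one thing to check is that this minimum is positive, which is automatic for the non-degenerate Markov sets arising in the paper (indeed, if the partition points of $\mathcal Z$ lie in $B$ then ${\rm span}(B\cap Q)=Q$).

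\emph{Second part.} Assume in addition that $T|_B$ is topologically transitive, i.e. the subshift of finite type on the finite alphabet $\{Q\in\mathcal Z:B\cap Q\neq\emptyset\}$ is irreducible. By the first part every $T^k(I)$ contains one of the finitely many intervals ${\rm span}(B\cap Q)$, each of length at least $2K$, so it suffices to treat each such $Q$ in turn and then take $l$ to be the largest, and $r$ the smallest, of the constants so produced. Fix $x\in B$, let $W\ni x$ be a cylinder (of a level to be chosen) with symbolic name $w_0w_1\cdots w_{m-1}$, and for each first-level piece $Q$ use irreducibility to pick an admissible word from the symbol of $Q$ to $w_0$ of length bounded by the number of symbols; appending $w_0w_1\cdots w_{m-1}$ gives an admissible word whose realisation $\widetilde I_Q$ is a cylinder on which $T^t$ is continuous and monotone, $t$ being the length of the prefix. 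The iterated one-step identity of the first part gives $T^t(B\cap\widetilde I_Q)=B\cap W$, so $T^t(\widetilde I_Q)$ is an interval containing ${\rm span}(B\cap W)\ni x$. Choosing the level of $W$ and the radius $r$ so that $B(x,r)\subseteq{\rm span}(B\cap W)$ and extracting, by monotonicity, the sub-interval $J\subseteq\widetilde I_Q$ with $T^t(J)=B(x,r)$ then gives the assertion with $j=t\leq l$, once one knows $\widetilde I_Q\subseteq T^k(I)$; the two remaining geometric points are discussed below.

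\emph{The main obstacle.} The combinatorics above is routine; the work lies entirely in the geometry, where two matching problems must be settled uniformly in $k$ and $I$. First, one needs the branch $\widetilde I_Q$ (hence $J$) to lie inside $T^k(I)$, i.e. inside ${\rm span}(B\cap Q)$, rather than spill over the edge of the first-level cylinder $Q$: this is automatic when ${\rm span}(B\cap Q)=Q$, and in general is arranged by a careful choice of the connecting word or by noting that when $T^k(B\cap I)$ meets several pieces the interval $T^k(I)$ is strictly larger than any single ${\rm span}(B\cap Q)$. Second, and more delicately, one needs $T^t(\widetilde I_Q)$ to cover a genuine two-sided ball $B(x,r)$ rather than just a one-sided interval with $x$ at an endpoint: here the generating property of the partition on $A$ enters, since cylinders around a point of $A$ shrink to it, so that a suitable low-level $W$ has ${\rm span}(B\cap W)$ containing a fixed-size two-sided neighbourhood of $x$ — the exception being when $x$ is an extreme point of $B$, where $B(x,r)$ is itself one-sided in $[0,1]$ and the argument degenerates without harm. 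Tracking $K$, $r$ and $l$ through these two steps and checking their independence of $k$ is the substance of the proof, and the finiteness of the alphabet together with the one-step identity is exactly what keeps the bounds uniform.
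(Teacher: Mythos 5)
Your first assertion is handled essentially as in the paper: the one-step identity $T(B\cap I)=B\cap I'$ coming from the Markov dichotomy, iterated down to the last symbol, and $K$ extracted from the finitely many lengths $|\mathrm{span}(B\cap Q)|$. That part is correct (and you are even a little more careful than the paper in flagging that positivity of the minimum requires the Markov set to be non-degenerate, which is harmless in the setting where the lemma is used).

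The second assertion is where there is a genuine gap, and your attempted patch of it is wrong. Your route (irreducibility of the finite-alphabet SFT gives a bounded connecting word from the symbol reached by $T^k(I)$ to a cylinder $W\ni x$, and the iterated identity gives $T^t(B\cap\widetilde I_Q)=B\cap W$) is the natural way to expand the paper's one-sentence argument, but it only produces the ball when $B(x,r)\subseteq\mathrm{span}(B\cap W)$, i.e.\ when $B\cap W$ has points strictly on both sides of $x$. The exceptional points are not just the two extreme points $\min B$ and $\max B$: for a Cantor-type Markov set $B$ every endpoint of a complementary gap is one-sidedly isolated in $B$, so $\mathrm{span}(B\cap W)$ never contains a two-sided neighbourhood of such an $x$; and for such an $x$ in the interior of $[0,1]$ the ball $B(x,r)$ is genuinely two-sided, so your claim that in the exceptional case ``$B(x,r)$ is itself one-sided in $[0,1]$'' is false (that happens only for $x\in\{0,1\}$). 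For these $x$ your construction yields at best $T^t(J)=$ a one-sided interval abutting $x$, not $B(x,r)$; covering a two-sided ball requires arguing about the actual branch image $T^t(\widetilde I_Q)$ (which may or may not extend beyond $\mathrm{span}(B\cap W)$), or some other input beyond the span bound you use, so as it stands this step would fail. By contrast, your first ``obstacle'' ($J\subseteq T^k(I)$) is not a real problem and needs no careful choice of connecting word: since $T^t(J)=B(x,r)\subseteq\mathrm{span}(B\cap W)$, $T^t(B\cap\widetilde I_Q)=B\cap W$ and $T^t$ is monotone on $\widetilde I_Q$, the interval $J$ lies between branch preimages of points of $B\cap W$, hence $J\subseteq\mathrm{span}(B\cap\widetilde I_Q)\subseteq\mathrm{span}(B\cap Q)\subseteq T^k(I)$ up to endpoints. (For fairness: the paper itself dismisses part two in one sentence and does not address the two-sidedness issue either; but your proposal, which explicitly routes everything through $\mathrm{span}(B\cap W)$, cannot reach the one-sided points of $B$ without an additional idea.)
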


\subsection{Upper bound} \label{sec:up}
For convenience from now on we will denote
$$P(a,q,\delta)=P(q(f-a)-\delta\phi)$$
We will obtain the upper bound in the dimension formula with the use of conformal measures. 
Let $a$ and $\delta$ be such that $\inf_q P(a, q, \delta)\leq 0$.
Fix $\alpha > 0$ and $\varepsilon >0$. Consider ${\cal Y}$, a subdivision of ${\cal Z}$ such that
$\var_{\cal Y} \phi < \alpha$. As for arbitrarily small $ \alpha$ we can always find such ${\cal Y}$, we need only to prove that

\[
\dim_H ((L_a \setminus N_d(\mu, {\cal Y}))\cap D_\alpha) \leq \delta_0 + \gamma
\]
for arbitrarily small $\gamma$. Indeed, as the Hausdorff dimension of $N_d \cap D_\alpha$ will go to 0 together with $d$, we will get an upper bound for the Hausdorff dimension of $L_a \cap D_\alpha$, we will then pass with $\alpha$ to 0 to obtain the assertion.

Consider $q_0$ such that $P(a,q_0, \delta_0)=p \in [0,\varepsilon)$. Let $\mu$ be the $q_0(f-a) -
\delta_0 \phi -p$-conformal measure. Choose $d>0$. Denote by $G_d(n)$ the set of cylinders $I\in {\cal Y}^n$ for which $\mu(T^n(I))>d$. Choose a small $\epsilon$ and denote

\[
X_{a,n,\epsilon} = \{I\in G_d(n); a-\epsilon < A_N(x,f) < a+\epsilon,\ A_n(x,\phi) > \alpha - \epsilon\ \forall
x\in I\}.
\]
Note that we have
\[
|I| \leq e^{n (\alpha - \epsilon - \var_{\cal Y}\phi)}
\]
for all $I\in X_{a,n,\epsilon}$. Fixing $n\in N$, we have

\[
1= \sum_{I\in {\cal Y}^n} \mu(I) \geq \sum_{I\in G_d(n)} \mu(I) \geq \sum_{I\in
X_{a,n,\epsilon}} \mu(I).
\]

Substituting the definition of $\mu$, we get

\[
\mu(I) \geq \inf_{x\in I} \exp(-q_0 S_n (f-a) +np) |(T^n)'|^{\delta_0}(x)
\cdot \mu(T^n(I)) \geq e^{-n\epsilon |q_0|+np} e^{-n \delta_0 \var_{\cal Y}} |I|^{\delta_0} d.
\]

Hence, for any $\gamma$ such that

\[
\gamma > \frac {\epsilon (|q_0|+1) +
\delta_0 \var_{\cal Y}\phi} {\alpha - \epsilon - \var_{\cal Y}\phi}
\]

we have

\[
\sum_n \sum_{I\in X_{a,n,\epsilon}} |I|^{\delta_0 + \gamma} \leq d^{-1} \sum_n
e^{-n(\epsilon-p)} < \infty,
\]

which implies that

\[
\dim_H ((L_a \setminus N_d(\mu, {\cal Y}))\cap D_\alpha) \leq \delta_0 + \gamma.
\]
As $\epsilon$ and $\var_{\cal Y}$ can be chosen arbitrarily small,
$\gamma$ also can be arbitrarily small. We are done.

\subsection{Lower bound}

We start with the following lemma
\begin{lem}\label{endpoints}
For all $\delta\in\R$ and $a\in\text{int}(H)$,
$$\lim_{q\to-\infty}P(q(f-a)-\delta\phi)=\lim_{q\to\infty}P(q(f-a)-\delta\phi)=\infty.$$
\end{lem}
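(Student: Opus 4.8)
The plan is to exhibit, for each sign of $q$, a sequence of measures whose pressure values blow up, using the assumption $a\in\mathrm{int}(H)$ to produce measures pushing $\int f\,d\mu$ strictly below and strictly above $a$. Since $a\in\mathrm{int}(H)$, there exist $a_-<a<a_+$ with $a_\pm\in H$, and by the definition of $H$ there are non-atomic ergodic measures $\nu_\pm$, each supported on a Markov subset of $A$, with $\int f\,d\nu_\pm$ arbitrarily close to $a_\pm$; in particular we may fix such $\nu_-,\nu_+$ with $\int f\,d\nu_-<a<\int f\,d\nu_+$. On a Markov subset $T$ restricts to a subshift of finite type, so $h(\nu_\pm)\ge 0$ and $\lambda(\nu_\pm)=\int\phi\,d\nu_\pm$ is finite; more importantly, on a fixed Markov subset the topological pressure of any continuous potential is finite and the variational principle $P(T|_B,h)=\sup_\mu\{h(\mu)+\int h\,d\mu\}$ holds.

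The key estimate is then the trivial lower bound coming from a single measure: for any Markov set $B$ and any invariant $\mu$ on $B$,
\[
P\big(q(f-a)-\delta\phi\big)\ \ge\ P\big(T|_B,\,q(f-a)-\delta\phi\big)\ \ge\ h(\mu)+q\!\int(f-a)\,d\mu-\delta\!\int\phi\,d\mu .
\]
Taking $\mu=\nu_+$ gives $P(q(f-a)-\delta\phi)\ge h(\nu_+)+q\,(\int f\,d\nu_+-a)-\delta\int\phi\,d\nu_+$, and since $\int f\,d\nu_+-a>0$ the right-hand side tends to $+\infty$ as $q\to+\infty$ (the $\delta$-term is a fixed finite constant). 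Symmetrically, taking $\mu=\nu_-$ and using $\int f\,d\nu_--a<0$ shows the right-hand side tends to $+\infty$ as $q\to-\infty$. This yields both limits at once.

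The only point requiring a little care is that the pressure $P(f)$ on $A$ is defined in the excerpt as $\sup_{B\in\mathcal M(A)}\sup_{h\le f, h\in C([0,1])} P(T|_B,h)$, i.e.\ via continuous potentials $h$ dominated by $f$, whereas $q(f-a)-\delta\phi$ need not be continuous (recall $\phi=\log|T'|$ is only regular, possibly with discontinuities and $-\infty$ values at $Z$). I would handle this by restricting to a fixed Markov set $B$ supporting $\nu_\pm$ on which $\phi$ is bounded and, after passing to a sufficiently fine generating subpartition, the relevant potential can be uniformly approximated from below by continuous functions; alternatively one invokes the already-cited agreement (via Hofbauer--Urba\'nski, \cite{HU}) between this pressure and the conformal-measure / variational characterization, so that the displayed variational lower bound is legitimate. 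This is the main obstacle: making sure the one-measure lower bound for $P$ is valid for the (merely regular) potential $q(f-a)-\delta\phi$ rather than only for continuous potentials. Once that is in place, the divergence is immediate from the sign of $\int f\,d\nu_\pm-a$, and letting $\int f\,d\nu_\pm\to a_\pm$ is not even needed — any fixed $\nu_\pm$ with the correct strict inequality suffices.
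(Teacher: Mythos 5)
Your proposal is correct and follows essentially the same route as the paper: pick measures with $\int f\,d\nu_-<a<\int f\,d\nu_+$ (available since $a\in\mathrm{int}(H)$) and apply the one-measure variational lower bound $P(q(f-a)-\delta\phi)\ge h(\nu_\pm)+q\int(f-a)\,d\nu_\pm-\delta\int\phi\,d\nu_\pm$, letting the sign of $\int(f-a)\,d\nu_\pm$ force divergence as $q\to\pm\infty$. The extra care you take (measures with integrals only close to $a_\pm$, and validity of the variational inequality for the merely regular potential, which in the paper is covered by Hofbauer's variational principle, Lemma~8 of \cite{H2}) is sensible but does not change the argument.
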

\begin{proof}
Since $a\in\inte H$ there exist $a_1<a<a_2$ such that $a_1,a_2\in H$. Thus there exist hyperbolic measures $\mu_1,\mu_2$ such that $\int f\text{d}\mu_1=a_1$ and $\int f\text{d}\mu_2=a_2$. Thus
$$P(q(f-a)-\delta\phi)\geq q(a_1-a)+h(\mu)-\delta\int\phi\text{d}\mu$$
and since $\phi$ is bounded we have that $\lim_{-q\to\infty}P(q(f-a)-\delta\phi)=\infty.$ We also have that
$$P(q(f-a)-\delta\phi)\geq q(a_2-a)+h(\mu)-\delta\int\phi\text{d}\mu$$
and so $\lim_{q\to\infty}P(q(f-a)-\delta\phi)=\infty$.
\end{proof}

We can now proceed with the lower bound for Theorem \ref{thm:main}, starting from the interior of the spectrum.
We will fix $a$ in the interior of $H$ and we will choose $\delta$ such that
$$\gamma:=\inf_{q\in\R}\{P(q(f-a)-\delta\phi)\}>0.$$
We show that
$$\dim_H L_a\cap D_0\geq\delta$$
via the following key lemma. 
\begin{lem} \label{lem:lowerint}
If $a\in\text{int}(H)$ and $\delta>0$ is such that
$$\gamma:=\inf_{q\in\R}\{P(q(f-a)-\delta\phi)\}>0$$
then there exists an ergodic measure $\mu$ such that $\int f\rm{d}\mu=a$, $\lambda(\mu) > \gamma/2$, and $\frac{h(\mu)}{\lambda(\mu)}\geq\delta$.
\end{lem}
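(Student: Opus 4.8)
The plan is to find the measure $\mu$ by a weak-* compactness argument applied to equilibrium states associated with the family of potentials $q(f-a)-\delta\phi$ as $q$ ranges over $\R$. Since $a\in\inte(H)$, Lemma \ref{endpoints} gives that $q\mapsto P(q(f-a)-\delta\phi)$ tends to $+\infty$ as $q\to\pm\infty$, and by assumption it stays $\geq\gamma>0$ everywhere, so the infimum is attained (or nearly attained) at some finite $q_0$. First I would pick, for each $q$, a Markov subset $B=B(q)$ and an ergodic measure $\mu_q$ supported on $B$ (using that the pressure on $A$ is defined via the supremum over Markov subsets, together with the fact that on a subshift of finite type equilibrium states exist) so that $h(\mu_q)+\int\bigl(q(f-a)-\delta\phi\bigr)\,d\mu_q$ is within, say, $\gamma/4$ of $P(q(f-a)-\delta\phi)\geq\gamma$. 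Because $\phi$ is bounded on any fixed Markov set, these measures have uniformly bounded Lyapunov exponents; the delicate point is that, near $q_0$, the Markov subsets must be chosen uniformly, which one arranges using Proposition \ref{supersyst} to combine finitely many Markov sets into a larger one.

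Next I would extract the correct value of $\int f\,d\mu$. The standard convexity trick: since $P(q(f-a)-\delta\phi)$ has its minimum near $q_0$, the one-sided slopes of this convex function straddle $0$, which forces $\int(f-a)\,d\mu_q$ to be small in sign-appropriate ways for $q$ slightly above and slightly below $q_0$. Concretely, comparing the near-equilibrium inequality at $q_0$ with the variational inequality at a nearby $q$ shows $\bigl|\int(f-a)\,d\mu_{q_0}\bigr|$ is as small as we like once the approximation parameters are tuned; so $\int f\,d\mu_{q_0}$ is close to $a$. To land exactly on $a$ I would then take a convex combination (or a weak-* limit) of two such measures, one with Birkhoff integral slightly below $a$ and one slightly above — these exist because $a$ is interior, so values on both sides are realized — and use that $\int f\,d\mu$ depends affinely on $\mu$ to hit $a$ on the nose; ergodicity of the final measure is recovered by an ergodic decomposition argument, noting that the defining inequalities $\lambda>\gamma/2$ and $h/\lambda\geq\delta$ are preserved by passing to a suitable ergodic component (here one uses that $\int\bigl(q_0(f-a)-\delta\phi\bigr)\,d\mu + h(\mu)\geq\gamma - \tfrac\gamma4$ together with $\int(f-a)\,d\mu\approx 0$ to conclude $h(\mu)-\delta\lambda(\mu)\geq$ something positive, hence $h/\lambda\geq\delta$, while $\lambda(\mu)\geq\gamma/2$ comes from $h(\mu)\geq\delta\lambda(\mu)$ combined with $h(\mu)+O(\epsilon)-\delta\lambda(\mu)\geq\gamma - O(\epsilon)$ forcing $\lambda(\mu)$ away from $0$).

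Let me spell out the Lyapunov bound, since it is where the number $\gamma/2$ comes from. From $h(\mu_{q_0})+\int q_0(f-a)\,d\mu_{q_0}-\delta\lambda(\mu_{q_0})\geq\gamma-\tfrac\gamma4$ and $\bigl|\int q_0(f-a)\,d\mu_{q_0}\bigr|<\tfrac\gamma4$ (arranged by tuning $\epsilon$ relative to $|q_0|$), we get $h(\mu_{q_0})-\delta\lambda(\mu_{q_0})\geq\tfrac\gamma2$; since $h(\mu_{q_0})\leq\lambda(\mu_{q_0})$ by the Ruelle inequality on the Markov subset, this yields $(1-\delta)\lambda(\mu_{q_0})\geq\tfrac\gamma2$ if $\delta<1$ — but more robustly, $h(\mu_{q_0})\geq\tfrac\gamma2+\delta\lambda(\mu_{q_0})\geq\tfrac\gamma2$, and rearranging $h(\mu_{q_0})-\delta\lambda(\mu_{q_0})\geq\tfrac\gamma2>0$ gives $\lambda(\mu_{q_0})\geq\tfrac{\gamma}{2}\cdot\tfrac1{ (h/\lambda)-\delta}\cdot\tfrac1{\lambda}\cdot\lambda$, i.e. directly $\lambda(\mu_{q_0})>\tfrac\gamma2$ once one observes $h(\mu)-\delta\lambda(\mu)\le h(\mu)$ forces $\lambda(\mu)\ge(h(\mu)-\delta\lambda(\mu))/(h(\mu)/\lambda(\mu))\cdot$, which is cleanest to just write as $\lambda(\mu_{q_0})\ge h(\mu_{q_0})-\delta\lambda(\mu_{q_0}) \ge \gamma/2$ using $h\le\lambda$. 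Dividing $h(\mu_{q_0})-\delta\lambda(\mu_{q_0})\geq\tfrac\gamma2$ by $\lambda(\mu_{q_0})>0$ gives $\tfrac{h(\mu_{q_0})}{\lambda(\mu_{q_0})}\geq\delta+\tfrac{\gamma}{2\lambda(\mu_{q_0})}>\delta$, which is the last required inequality.

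The main obstacle I expect is the uniformity of the Markov subsets: the pressure $P(q(f-a)-\delta\phi)$ is a supremum over all Markov subsets of $A$, and a priori the near-optimal Markov subset drifts with $q$ and may have $\var$ of $\phi$ or span of cylinders degenerating. Handling this — choosing a single Markov set $B$ that is near-optimal for $q_0$ and simultaneously for the two flanking parameters $q_0\pm\eta$, which is exactly what Proposition \ref{supersyst} provides by enlarging a pair of Markov sets — and then carrying out the weak-* limit \emph{inside} the compact space of measures on that fixed $B$ (so that entropy is upper semicontinuous and $\int f\,d\mu$, $\int\phi\,d\mu$ are continuous) is the technical heart. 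Everything else is the convexity bookkeeping sketched above.
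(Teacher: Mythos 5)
Your convexity bookkeeping at the end (from $h(\mu)-\delta\lambda(\mu)\geq\gamma/2$ and $h\leq\lambda$ deduce $\lambda(\mu)>\gamma/2$ and $h(\mu)/\lambda(\mu)\geq\delta$) matches the paper's final step. But there is a genuine gap at the crucial point: how you produce an \emph{ergodic} measure with $\int f\,d\mu$ \emph{exactly} equal to $a$. Your mechanism is to take two near-equilibrium ergodic measures with integrals on either side of $a$, form a convex combination (or weak-$*$ limit) hitting $a$, and then ``recover ergodicity by ergodic decomposition.'' This cannot work: the ergodic decomposition of $t\mu_1+(1-t)\mu_2$ is just $\{\mu_1,\mu_2\}$, and each component has $\int f\,d\mu_i\neq a$, so no ergodic component satisfies the constraint. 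The constraint cannot be relaxed either, since the lemma is used downstream via Birkhoff's theorem (typical points of the ergodic measure must lie in $L_a$), so an invariant but non-ergodic measure with average $a$ is useless. The weak-$*$ limit variant has the same defect (limits of ergodic measures need not be ergodic), and in addition $\int f\,d\mu$ and $\int\phi\,d\mu$ are not weak-$*$ continuous here because $f$ and $\phi$ are only regular, not continuous --- a point your sketch glosses over, and also the reason your appeal to ``equilibrium states exist on a subshift of finite type'' is not immediate, since the restricted potentials need not be continuous on the Markov set.

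The paper closes exactly this gap by a different device: using regularity, it replaces $f,\phi$ by step functions $\tilde f,\tilde\phi$ (with errors calibrated to $\gamma$ and to the length of a compact $q$-interval $[x_0,x_1]$ around the near-minimizer, supplied by Lemma \ref{endpoints}), and then chooses one Markov set $B$ on whose partition $\tilde f,\tilde\phi$ are locally constant and whose pressure approximates $P$ at $x_0,c,x_1$. On $B$ one has a genuine one-parameter family of \emph{unique, ergodic, non-atomic} equilibrium states $\mu_q$, with $q\mapsto P_B$ analytic, derivative $\int\tilde f\,d\mu_q-a$, and $\mu_q$ weak-$*$ continuous; the mean value theorem (pressure drops by at least $\gamma/2$ from $x_0$ and $x_1$ to $c$) shows $\int\tilde f\,d\mu_q-a$ takes both signs on $[x_0,x_1]$, and continuity of $q\mapsto\int f\,d\mu_q$ (using non-atomicity and that $f$ has only countably many discontinuities) gives, by the intermediate value theorem, a parameter $q_2$ with $\int f\,d\mu_{q_2}=a$ \emph{exactly}, with $\mu_{q_2}$ already ergodic. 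If you want to keep your outline, you need to replace the convex-combination step by some such continuity-in-$q$ argument for a family of unique equilibrium states on a single fixed Markov set with well-behaved (e.g.\ locally constant) potentials; without it the exact-value-plus-ergodicity requirement is not met.
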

\begin{proof}
We start by finding $c$ and an interval $I=[x_0,x_1]$ which contains $c$ such that $P(c(f-a)-\delta\phi)=\gamma$, where $P(x_0(f-a)-\delta\phi)>2\gamma$
and $P(x_1(f-a)-\delta\phi)>2\gamma$. We can do this by Lemma \ref{endpoints}. Since $f$ and $\phi$ are regular we can find step functions $\tilde{f}$ and $\tilde{\phi}$ such that
\begin{enumerate}
\item
$|f-\tilde{f}|<\frac{\gamma}{2(x_1-x_0)}$,
\item
For all $q\in I$,
$$|q(f-a)-\delta\phi-(q(\tilde{f}-a)-\delta\tilde{\phi})|\leq\frac{\gamma}{4}.$$
\end{enumerate}
Thus we will have that by Lemma 6 of \cite{H2}
\begin{eqnarray*}
P(c(\tilde{f}-a)-\delta\tilde{\phi})&<&5\gamma/4)\\
P(x_0(\tilde{f}-a)-\delta\tilde{\phi})&>&7\gamma/4\\
P(x_1(\tilde{f}-a)-\delta\tilde{\phi})&>&7\gamma/4\\
\forall q\in I, P(q(\tilde{f}-a)-\delta\tilde{\phi})&>&3\gamma/4).
\end{eqnarray*}
Thus we can find a Markov set $B$ such that $\tilde{f}$ and $\tilde{\phi}$ are constant on all elements of the partition for $B$ and where
\begin{eqnarray*}
P_{B}(c(\tilde{f}-a)-\delta\tilde{\phi})&<&5\gamma/4)\\
P_{B}(x_0(\tilde{f}-a)-\delta\tilde{\phi})&>&7\gamma/4\\
P_{B}(x_1(\tilde{f}-a)-\delta\tilde{\phi})&>&7\gamma/4\\
\forall q\in I, P_B(q(\tilde{f}-a)-\delta\tilde{\phi})&>&3\gamma/4).
\end{eqnarray*}
We have that the function $q(\tilde{f}-a)-\delta\tilde{\phi})$ when restricted to $B$ will have unique ergodic, non-atomic equilibrium states for all $q\in\R$. Moreover $q\to P_B(q(\tilde{f}-a)-\delta\tilde{\phi})$ is analytic and the derivative with at the point $q$ will be given by $\int \tilde{f}\rm{d}\mu_q-a$. We also know that the measures $\mu_q$ will vary weak-*-continuously with $q$. By the mean value theorem we can see that
$$\{\int \tilde{f}\rm{d}\mu_q:q\in I\}\supseteq \left[a-\frac{\gamma}{2(x_1-x_0)},a+\frac{\gamma}{2(x_1-x_0)}\right].$$
Thus we know that there exist $q_0,q_1\in I$ such that $\int f\rm{d}\mu_{q_0}<a$ and $\int f\rm{d}\mu_{q_1}>a$. Since $f$ is discontinuous at most countably many points by weak-*-continuity the function $q\to\int f\rm{d}\mu_q$ will vary continuously. Thus there exists $q_2\in I$ such that $\int f\rm{d}\mu_{q_2}=a$. 

We have, since $\mu_{q_2}$ is an equilibrium state,
$$h(\mu_{q_2})+\int(q_2(\tilde{f}-a)-\delta\tilde{\phi})\rm{d}\mu_{q_2}=P_{B}(q_2(\tilde{f}-a)-\delta\tilde{\phi})>3\gamma/4$$
Thus since
$$|q_2(f-a)-\delta\phi-(q_2(\tilde{f}-a)-\delta\tilde{\phi})|<\frac{\gamma}{4}$$
we know that
$$h(\mu_{q_2})+\int(q_2(f-a)-\delta\phi)\rm{d}\mu_{q_2}>\gamma/2.$$
Since $\int f\rm{d}\mu_{q_2}=a$ we can conclude that $h(\mu_{q_2})-\delta\int\phi\rm{d}\mu_2>0$ and since $0<h(\mu_{q_2})\leq\lambda(\mu_{q_2})$ we have that $\lambda(\mu_{q_2}) > \gamma/2$. Hence,
$$\frac{h(\mu_{q_2})}{\lambda(\mu_{q_2})}\geq\delta.$$
\end{proof}
By combining the Birkhoff ergodic theorem and the fact proved in \cite{HR} that in this setting the dimension of an ergodic measure with positive entropy is given by the entropy divided by the Lyapunov exponent we can conclude that $\dim_H L_a\cap D_0\geq\delta_{0}(a)$ for any $a\in \text{int}(H)$. Combining this with our upper bound shows that for $a\in \text{int}(H)$, $\dim_H L_a\cap D_0=\delta_{0}(a)$. So for part one of  Theorem \ref{thm:main} the remaining is the lower bound for the endpoints which is done in the next section. Part 2 of the Theorem is an immediate corollary of what we have just done 
\begin{cor}
For $a\in\text{int}(H)$ we have that
$$\delta_0(a)=\sup\left\{\frac{h(\mu)}{\lambda(\mu)}:\mu\text{ $T$ ergodic and }\int f d\mu=a\right\}.$$
\end{cor}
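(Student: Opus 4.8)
The plan is to derive this corollary directly from the dimension formula $\dim_H(L_a\cap D_0)=\delta_0(a)$ (valid for $a\in\inte(H)$, which we have just established) together with the lower bound construction of Lemma \ref{lem:lowerint} and the Hofbauer--Raith dimension formula for ergodic measures of positive entropy. The argument splits into two inequalities.

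For the inequality $\delta_0(a)\leq\sup\{h(\mu)/\lambda(\mu)\}$, I would argue as follows. Fix any $\delta<\delta_0(a)$. By the definition \eqref{deltadef} of $\delta_0(a)$ there is some $\delta'$ with $\delta\le \delta' $ and $\inf_q P(q(f-a)-\delta'\phi)>0$; since pressure is monotone decreasing in $\delta$ (as $\phi>0$ on the relevant orbits — more precisely one uses that replacing $\delta'$ by $\delta$ only increases pressure on Markov subsets where $\phi$ is positive), we get $\gamma:=\inf_q P(q(f-a)-\delta\phi)>0$ as well, so Lemma \ref{lem:lowerint} applies and produces an ergodic measure $\mu$ with $\int f\,d\mu=a$, $\lambda(\mu)>\gamma/2>0$, and $h(\mu)/\lambda(\mu)\geq\delta$. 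Hence $\sup\{h(\mu)/\lambda(\mu):\mu\text{ ergodic},\int f\,d\mu=a\}\geq\delta$ for every $\delta<\delta_0(a)$, and letting $\delta\to\delta_0(a)$ gives the desired inequality. (If $\delta_0(a)\le 0$ there is nothing to prove, since $h(\mu)/\lambda(\mu)\ge 0$ whenever the supremum is over a nonempty set, and nonemptiness follows from $a\in H$.)

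For the reverse inequality $\sup\{h(\mu)/\lambda(\mu)\}\leq\delta_0(a)$, take any ergodic $\mu$ with $\int f\,d\mu=a$. If $\lambda(\mu)=0$ then $h(\mu)=0$ by Ruelle's inequality and the ratio is interpreted as $0$ (or one simply discards such measures, as they do not contribute to the supremum in a way that exceeds $\delta_0(a)\geq 0$); so assume $\lambda(\mu)>0$. Then by the Birkhoff ergodic theorem $\mu$-almost every point $x$ satisfies $A(x,f)=a$ and $\chi(x)=\lambda(\mu)>0$, so $\mu$-a.e.\ point lies in $L_a\cap D_0$; consequently $\dim_H(L_a\cap D_0)\geq\dim_H\mu$. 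By the Hofbauer--Raith formula quoted in the excerpt, $\dim_H\mu=h(\mu)/\lambda(\mu)$. Combining with the already-proven equality $\dim_H(L_a\cap D_0)=\delta_0(a)$ yields $h(\mu)/\lambda(\mu)\leq\delta_0(a)$. Taking the supremum over all such $\mu$ finishes the proof.

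The only genuinely delicate point is making sure the two supremum conventions match up at the boundary cases: when $L_a\cap D_0=\emptyset$ (so $\dim_H$ of it is $-\infty$ by the paper's convention) versus when there do exist ergodic measures with $\int f\,d\mu=a$ but all of them have zero Lyapunov exponent. For $a\in\inte(H)$, however, Lemma \ref{lem:lowerint} (applied with a small positive $\delta$, available since $\delta_0(a)>0$ on the interior — this follows from positivity of topological entropy and the characterization of $\dim_{\rm hyp}$) guarantees a hyperbolic ergodic measure with the right average, so neither degenerate case arises, and the two sides of the claimed identity are honest nonnegative reals. I would state this nonemptiness remark explicitly before invoking Lemma \ref{lem:lowerint}, and otherwise the proof is the short chain of implications above.
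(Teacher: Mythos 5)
Your argument is correct and is essentially the paper's own proof: the inequality $\delta_0(a)\le\sup$ comes from Lemma \ref{lem:lowerint}, and the reverse inequality from the Birkhoff ergodic theorem, the Hofbauer--Raith formula $\dim_H\mu=h(\mu)/\lambda(\mu)$, and the already established upper bound $\dim_H(L_a\cap D_0)\le\delta_0(a)$. Two cosmetic points: the monotonicity-in-$\delta$ step is both shaky as justified ($\phi=\log|T'|$ need not be positive) and unnecessary, since you can apply Lemma \ref{lem:lowerint} directly at $\delta'$ to obtain $h(\mu)/\lambda(\mu)\ge\delta'\ge\delta$; and the Hofbauer--Raith formula requires positive entropy, but measures with $\lambda(\mu)>0$ and $h(\mu)=0$ have ratio $0\le\delta_0(a)$ and so do not affect the supremum.
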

\begin{proof}
This is proved by combining Lemma \ref{lem:lowerint} and the fact that the dimension of any ergodic measure with positive entropy which has integral $a$ will give a lower bound for $\dim_H L_a\cap D_0$.
\end{proof}


\section{The lower bounds for the parabolic part and the end points}

\subsection*{Limiting measures}
The lower bound at the endpoints of the spectrum will be obtained with the use of the following theorem which will also be used for our lower bound for the dimension of $L_a$ when $a\in H_p$

\begin{thm} \label{thm:w}
Let $\{(B_i, {\cal Y}_i)\}$ be a sequence of Markov sets contained in the topologically transitive completely invariant set $A\subset [0,1]$. For each $i$ let $\mu_i$ be a Gibbs measure supported on $B_i$ with Hausdorff dimension $d_i$, Lyapunov exponent $\lambda_i>0$, entropy $h_i$, and let $a_i = \int f d\mu_i$. Assume that $a_i\to a$. Then there exist a measure $\mu$ supported on $A$ such that

\[
\dim_H \mu \geq \limsup d_i
\]
and for a $\mu$-typical point $x$ we have $A(x,f)=a$ and $\underline{\chi}(x)=\liminf \lambda_i$.
\begin{proof}
The proof will use the $w$-measures technique and will be a slightly modified version of \cite[Proposition 9]{GR}.

By passing to a subsequence we might assume that the sequence $\{d_i\}$ converges to a limit, let us denote it by $d$.
By Proposition \ref{supersyst} we can freely assume that $T|B_i$ is topologically transitive and $B_i\cap B_{i+1}\neq\emptyset$ for all $i$. We choose a fast decreasing sequence $\varepsilon_i\searrow 0$ such that $\varepsilon_i $ is sufficiently small compared with $\lambda_i$ and $h_i$.

Our first step is preparation of our Markov sets. The dynamics $T|_{B_i}$ is a subshift of finite type, we can choose a subpartition ${\cal Z}_i$ such that $T|_{B_i}$ is a subshift of finite type in the new partition as well and that $\var_{{\cal Z}_i} f, \var_{{\cal Z}_i} \phi < \varepsilon_i$. Let $x_i\in B_i\cap B_{i+1}$ and let $K_i, r_i, l_i$ be as given by Lemma \ref{lem:mark} as applied to $(B_i, {\cal Z}_i, x_i)$. We assume that the partition ${\cal Z}_{i+1}$ is so detailed with respect to ${\cal Z}_i$ that the ball $B(x_i, r_i)$ contains a first level cylinder $J_{i+1}\in {\cal Z}_{i+1}$. Denote $L=\sup |T'|$.

We will construct inductively a fast increasing sequence $\{m_i\}$ and will, also inductively, construct $\mu$. Let us begin the induction by considering the cylinders of level $m_1+1$ for $(T, B_1, {\cal Z}_1)$. Denote by ${\cal I}_1$ the family of those cylinders $I\in {\cal Z}_1^{m_1+1}$ for which for all $x\in I\cap B_1$ we have

\begin{equation} \label{aa}
|\sum_{k=0}^{m_1-1} f\circ T^k(x) - m_1 a_1| < 2 m_1 \varepsilon_1,
\end{equation}

\begin{equation} \label{ab}
|\sum_{k=0}^{m_1-1} \phi\circ T^k(x) - m_1 \lambda_1| < 2 m_1 \varepsilon_1,
\end{equation}
and

\[
|\log \mu_1(I) + m_1 h_1| < 2 m_1 \varepsilon_1.
\]

By Birkhoff Theorem and Shannon-McMillan-Breiman Theorem, for $m_1$ big enough we have

\[
\sum_{I\in {\cal C}_1} \mu_1(I) > 1-\varepsilon_1.
\]

By our assumption about $\var_{{\cal Z}_1} f, \var_{{\cal Z}_1} \phi$, \eqref{aa} and \eqref{ab} imply

\[
|\sum_{k=0}^{m_1-1} f\circ T^k(x) - m_1 a_1| < 3 m_1 \varepsilon_1,
\]
and

\[
|\sum_{k=0}^{m_1-1} \phi\circ T^k(x) - m_1 \lambda_1| < 3 m_1 \varepsilon_1,
\]

for all $x\in I$. For each of those intervals Lemma \ref{lem:mark} gives us some $j$ which can take only $l_1$ different values. We have thus for some $j_1\leq l_1$ at least $\frac 1 {l_1} \exp(m_1(h_1-3\varepsilon_1))$ intervals $I$ of length at least $L^{-l_1} \exp(m_1(-\lambda_1-3\varepsilon_1)) |J_2| > \exp(m_1(-\lambda_1-4\varepsilon_1))$ such that $T^{m_1+j_1}(I)=J_2$ and that for all $x\in I$

\[
|\sum_{k=0}^{m_1+j_1-1} f\circ T^k(x) - m_1 a_1| < 3 m_1 \varepsilon_1 + j_1 \sup|f| < 4 (m_1+j_1) \varepsilon_1,
\]
and

\[
|\sum_{k=0}^{m_1+j_1-1} \phi\circ T^k(x) - m_1 \lambda_1| < 3 m_1 \varepsilon_1 + j_1 \sup |\phi| < 4 (m_1+j_1) \varepsilon_1.
\]
We equidistribute the measure $\mu$ over all $I\in {\cal C}_1$.

Now we are ready for the inductive step. For $m_i$ we have at least $\exp(m_i(h_i-4\varepsilon_i))$ intervals $I$ with following properties:

\[
\exp(m_i (-\lambda_i - 4 \varepsilon_i)) \leq \diam I,
\]

\[
m_i (a_i - 4 \varepsilon_i) \leq m_i A_{m_i+j_i}(x,f) \leq m_i (a_i + 4 \varepsilon_i),
\]

\[
\exp(m_i (-h_i - 3\varepsilon_i)) \leq \mu(I) \leq \exp(m_i (-h_i + 3 \varepsilon_i)),
\]

\[
T^{m_i+j_i}(I)=J_{i+1}.
\]

Applying Birkhoff ergodic theorem, Shannon-McMillan-Breiman theorem and the law of iterated logarithm we can find in $J_{i+1}$ a family ${\cal D}_{i+1}$ of $(m_{i+1}-m_i-j_i+1)$ level cylinders for $(T, B_{i+1}, {\cal Z}_{i+1})$ for which the following are true:

\begin{itemize}
\item[i)] For any $k=1,\ldots,m_{i+1}-m_i-j_i$ for any $x\in I\cap B_{i+1}, I \in {\cal D}_{i+1}$
\[
|A_k(x,f) -ka_{i+1}| \leq m_i \varepsilon_i + 2 k \varepsilon_{i+1},
\]
\item[ii)] For any $k=1,\ldots,m_{i+1}-m_i-j_i$ for any $x\in I\cap B_{i+1}, I \in {\cal D}_{i+1}$
\[
|A_k(x,\phi) -k\lambda_{i+1}| \leq m_i \varepsilon_i + 2 k \varepsilon_{i+1},
\]
\item[iii)] For any $k=1,\ldots,m_{i+1}-m_i-j_i$ for any $x\in I\cap B_{i+1}, I\in {\cal D}_{i+1}$, the $k$ level cylinder $J$ of $(T, B_{i+1}, {\cal Z}_{i+1})$ containing $x$ satisfies
\[
|\log \mu_{i+1}(J) + k h_{i+1}| \leq m_i \varepsilon_i + 2 k \varepsilon_{i+1}.
\]
\end{itemize}

If $m_{i+1}$ is sufficiently big (depending on $m_i$, $\varepsilon_i$, $\varepsilon_{i+1}$) then

\[
\mu_{i+1} \bigcup_{I\in {\cal D}_{i+1}} I \geq (1-\varepsilon_{i+1}) \mu_{i+1}(J_{i+1}).
\]

By our assumption about $\var_{{\cal Z}_1} f, \var_{{\cal Z}_1} \phi$, points i) and ii) imply that for all $x\in I, I\in {\cal D}_{i+1}, k=1,\ldots,m_{i+1}-m_i-j_i$

\begin{equation} \label{ba}
|A_k(x,f) -ka_{i+1}| \leq m_i \varepsilon_i + 3 k \varepsilon_{i+1},
\end{equation}
and

\begin{equation} \label{bb}
|A_k(x,\phi) -k\lambda_{i+1}| \leq m_i \varepsilon_i + 3 k \varepsilon_{i+1},
\end{equation}

Like in the first step, we then find for each of those cylinders the time $j_{i+1}$ given by Lemma \ref{lem:mark} and we choose some of them with the same $j_{i+1}$, at least $1/l_{i+1}$ of the total number. In the chosen cylinders we find subintervals which are moved by $T^{m_{i+1}-m_i-j_i+j_{i+1}}$ onto $J_{i+2}$. 

We denote by ${\cal E}_{i+1}$ the set of those chosen subintervals $J$ and let 
\[
{\cal C}_{i+1} = \{T^{-m_i-j_i}_I(J); J\in {\cal E}_{i+1}, I \in {\cal C}_i\}.
\]
We equidistribute $\mu$ on ${\cal C}_{i+1}$.

Note that, provided $m_{i+1}$ is sufficiently big,

\[
m_{i+1} \varepsilon_{i+1} > 4 m_i \varepsilon_i + j_{i+1} \sup (|f|+|\phi|)
\]
and hence \eqref{ba}, \eqref{bb} and iii) give us back the inductive assumption for the step $i+1$.

This way, we have constructed a measure $\mu$ with following properties:

\begin{itemize}
\item[a)] For any $l=m_i+k<m_{i+1}$ for $\mu$-almost every $x$ we have
\[
|l\cdot A_l(x,f) - m_i a_i - k a_{i+1}| \leq 4m_i \varepsilon_i + 4 k \varepsilon_{i+1},
\]

\item[b)] For any $l=m_i+k<m_{i+1}$ for $\mu$-almost every $x$ we have
\[
|l\cdot A_l(x,\phi) - m_i \lambda_i - k \lambda_{i+1}| \leq 4m_i \varepsilon_i + 4 k \varepsilon_{i+1},
\]

\item[c)]
For any $l=m_i+k<m_{i+1}$, for any interval $I$ of length $\exp(m_i(-\lambda_i - 4\varepsilon_i) + k(-\lambda_{i+1} - 4 \varepsilon_{i+1}))$ we have
\[
\mu(I) \leq 2 (1-\varepsilon_{i+1})^{-2} \exp(m_i(-h_i+4 \varepsilon_i) + k(-h_{i+1}+4 \varepsilon_{i+1})).
\]
\end{itemize}

The points a), b) follow from \eqref{ba}, \eqref{bb}. The point c) follows from iii) because, by \eqref{bb}, the interval of this length can intersect at most two of the sets $T_C^{-m_i}(G)$ of positive measure $\mu$, where $G$ is a $k$ level cylinder of $(T, B_{i+1}, \cal Z_{i+1})$. Point a) implies a $\mu$-typical point has Birkhoff average of $f$ equal to $a$. Point b) implies, by Frostman's lemma, that the Hausdorff dimension of $\mu$ is at least $d$.
\end{proof}
\end{thm}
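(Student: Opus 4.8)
The plan is to construct $\mu$ by the $w$-measure (Moran-type) method: I concatenate long orbit blocks drawn successively from $B_1, B_2, \dots$, equidistributing mass over the combinatorially admissible pieces so as to obtain a consistent family of intervals and hence $\mu$ in the limit, and then read off $A(x,f)$, $\underline{\chi}(x)$ and $\dim_H\mu$ from the construction. First, passing to a subsequence, I may assume $d_i \to d := \limsup d_i$, so that $d_i \geq d - \varepsilon_i$ for a sequence $\varepsilon_i \searrow 0$ that I choose very small relative to $\lambda_i$, $h_i$ and the constants $K_i, r_i, l_i$ coming from Lemma \ref{lem:mark}; by Proposition \ref{supersyst} I may also assume each $T|_{B_i}$ is transitive and $B_i \cap B_{i+1} \neq \emptyset$. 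Using $h_i = \lambda_i d_i$ (the Hofbauer--Raith formula applied to $\mu_i$), a stage-$i$ piece of the construction, of combinatorial length $m_i$ with $n_j$ steps contributed by $B_j$, will have diameter comparable to $\exp(-\sum_{j\le i} n_j\lambda_j)$ and $\mu$-measure comparable to $\exp(-\sum_{j\le i} n_j\lambda_j d_j)$; the ratio of exponents is a weighted average of the $d_j$, which I force above $d - O(\varepsilon)$ by letting the $m_i$ grow fast enough that each partial sum is dominated by its last term.

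For the inductive step, at stage $i$ I have a family ${\cal C}_i$ of intervals carrying equidistributed $\mu$-mass, each mapped by an appropriate iterate of $T$ onto a fixed small cylinder $J_{i+1} \subset B_{i+1}$. Inside $J_{i+1}$, Birkhoff's theorem, the Shannon--McMillan--Breiman theorem and the law of the iterated logarithm produce a large subfamily of long cylinders of $(T, B_{i+1}, {\cal Z}_{i+1})$ carrying almost all of $\mu_{i+1}(J_{i+1})$ and along which $A_k(x,f)$, $A_k(x,\phi)$ and $\tfrac1k\log\mu_{i+1}$ are controlled simultaneously for \emph{every} block time $k$, not merely at the endpoint; this uniform-in-$k$ control is what will pin down $\underline{\chi}$. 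I then apply Lemma \ref{lem:mark} again to bridge from these cylinders onto $J_{i+2}$ in at most $l_{i+1}$ extra steps, retaining a $1/l_{i+1}$ fraction with a common bridging time, pull back through ${\cal C}_i$, and equidistribute to get ${\cal C}_{i+1}$. Choosing $m_{i+1}$ sufficiently large makes the bridging contributions negligible and restores the inductive estimates on diameters, Birkhoff sums and $\mu$-measures.

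It then remains to extract the conclusions. Since $a_i \to a$, any average $A_l(x,f)$ is, up to $O(\varepsilon)$, a convex combination of $a_1,\dots,a_{i+1}$ and hence tends to $a$. The partial sums of $\phi$ over a $B_{i+1}$-block vary monotonically in the block time $k$ between the value $\approx\lambda_i$ at the block's start and $\approx\lambda_{i+1}$ at its end, so $\liminf_l A_l(x,\phi) = \liminf_i\lambda_i$, and since $\lambda_i>0$ the average stays bounded away from $0$ throughout. For the dimension I use Frostman's lemma: within a block, bounded distortion of the SFT $(T, B_{i+1}, {\cal Z}_{i+1})$ together with the uniform control on $A_k(x,\phi)$ shows that an interval of diameter $\exp(-(m_i\lambda_i + k\lambda_{i+1}) + O(\varepsilon(m_i+k)))$ meets at most two of the relevant positive-$\mu$-measure pieces, so $\mu(I) \leq |I|^{d-\varepsilon}$ for all small $|I|$, and letting $\varepsilon\to0$ gives $\dim_H\mu \geq d$. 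I expect the main obstacle to be precisely this last estimate: one must carry the many small errors --- from $\var_{{\cal Z}_i}f$ and $\var_{{\cal Z}_i}\phi$, from the bridging factors $L^{l_i}$ and $l_i$, and from the $\varepsilon_i$ themselves --- coherently through all scales, and in particular through the scales interpolating between consecutive blocks, where both the ``at most two pieces'' geometric input and the law of the iterated logarithm are indispensable.
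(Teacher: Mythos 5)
Your proposal is correct and follows essentially the same route as the paper: the same $w$-measure concatenation of long blocks from successive $B_i$ bridged via Lemma \ref{lem:mark}, with uniform-in-$k$ control of $A_k(x,f)$, $A_k(x,\phi)$ and $\log\mu_{i+1}$ from Birkhoff, Shannon--McMillan--Breiman and the law of the iterated logarithm, equidistribution over the surviving cylinders, and the ``at most two pieces'' Frostman estimate for the dimension. The only cosmetic differences are that you invoke $h_i=\lambda_i d_i$ explicitly up front (the paper carries $h_i$ through and uses this relation only at the end) and your remark that the averages of $\phi$ stay bounded away from zero is unnecessary (and need not hold when $\liminf\lambda_i=0$), but it is not used for the stated conclusion.
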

\subsection{Endpoints of $H$}
Let us now take $a=\max H$ (the case $a=\min H$ is analogous), assume that $\delta_0(a)\neq-\infty$ and thus we can assume that $P(a, q, \delta)\geq \gamma >0$ for some $\delta$ and all $q\in \R$. As $a=\max H$, $P(a, q, \delta)$ is nonincreasing and

\[
\lim_{q\to\infty} \frac d {dq} P(a, q, \delta) = 0.
\]

Moreover, as $\lim_{q\to\infty} P(a, q, \delta) > -\infty$, we have 

\[
\limsup_{q\to\infty} q \frac d {dq} P(a, q, \delta) = 0.
\]

Let us choose some sequence $q_i\to\infty$ with $q_i \frac d {dq} P(a, q, \delta)_{|q=q_i}>-\gamma/2$. Let 

\[
a_i = a + \frac d {dq} P(a, q, \delta)_{|q=q_i},
\]

assume that $a_i\in H$. Then, as $P(a', q, \delta) = q(a'-a) + P(a, q, \delta)$, we have 

\begin{eqnarray*}
P(a_i, q_i, \delta) &>& \frac 12 \gamma,\\
\frac d {dq} P(a_i, q, \delta)_{|q=q_i} &=& 0.
\end{eqnarray*}

By convexity of $P$, $q_i$ is the minimum point of $P(a_i, \cdot, \delta)$. We can thus apply Lemma \ref{lem:lowerint} to construct an ergodic measure $\mu_i$ such that

\begin{eqnarray*}
\lambda(\mu_i) &>& \gamma_2,\\
\int f d\mu_i &=& a_i,\\
\frac {h(\mu_i)} {\lambda(\mu_i)} &\geq& \delta.
\end{eqnarray*}

Applying Theorem \ref{thm:w} we can construct a set of points with Birkhoff average $a$ and this set has Hausdorff dimension $\delta$. Moreover, as all the measures $\mu_i$ have Lyapunov exponents uniformly bounded away from zero, all those points are in $D_{\gamma/2}$. Hence,

\[
\dim_H L_a \cap D_0 \geq \delta.
\]

\subsection{The full spectrum in $H_p$}


We use the following preliminary result
\begin{lem}\label{invapprox}
Let $\mu$ be a $T$-inv prob measure with support contained on a Markov set $M$. We can find a sequence of measures $\mu_n$  each supported on $M$ such that each $\mu_n$ is ergodic and non-atomic, $\mu$ is a weak* limit of $\{\mu_n\}$ and $\lim_{n\to\infty}h(\mu_n)=h(\mu)$.
\end{lem}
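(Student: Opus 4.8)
The goal is to approximate an arbitrary $T$-invariant probability measure $\mu$ supported on a Markov set $M$ by a sequence of ergodic, non-atomic measures $\mu_n$, all supported on $M$, converging weak-$*$ to $\mu$ with $h(\mu_n)\to h(\mu)$. The plan is to exploit that $(M,T)$ is symbolically a subshift of finite type (SFT), so that the classical approximation theory for SFTs applies. First I would reduce to the ergodic case: by the ergodic decomposition, $\mu=\int \nu\, d\tau(\nu)$ with $h(\mu)=\int h(\nu)\,d\tau(\nu)$; choosing finitely many ergodic components and taking a suitable convex combination, one obtains an ergodic (indeed, a Markov/Gibbs) measure that is close to $\mu$ in the weak-$*$ topology and whose entropy is close to $h(\mu)$ from below --- this is standard for SFTs via the density of ergodic measures (Parthasarathy) together with upper semicontinuity of entropy and the fact that one can realise nearby weak-$*$ values with Bernoulli or Markov measures whose entropy approximates $h(\mu)$. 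Actually the cleanest route is: approximate $\mu$ weak-$*$ by a finite-support-on-cylinders Markov measure $m_k$ obtained from the $k$-block statistics of $\mu$; as $k\to\infty$, $m_k\to\mu$ weak-$*$ and $h(m_k)\to h(\mu)$ (this is the standard fact that the entropy of the $k$-block Markovisation decreases to $h(\mu)$, combined with lower-semicontinuity considerations giving $\liminf h(m_k)\ge h(\mu)$; in fact $h(m_k)\ge h(\mu)$ always and $h(m_k)\to h(\mu)$).

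Each $m_k$ is a Markov measure on the SFT $(M,T)$, hence a Gibbs/equilibrium state for a locally constant potential. The remaining issue is that $m_k$ need not be ergodic (its support may be a union of irreducible components) and need not be non-atomic. For non-atomicity: since $h_{\mathrm{top}}(T|_A)>0$ and $\mu$ is not concentrated on a periodic orbit when $h(\mu)>0$, one can arrange --- enlarging $M$ if necessary using Proposition \ref{supersyst} so that $T|_M$ is topologically transitive --- that the relevant Markov measures charge a transitive SFT carrying positive entropy, and a positive-entropy ergodic measure on such a system is automatically non-atomic. When $h(\mu)=0$ the statement still requires a non-atomic $\mu_n$: here I would instead take $\mu_n$ to be a small-entropy ergodic Markov measure on a transitive subshift of $M$ chosen weak-$*$ close to $\mu$, which exists because transitive positive-entropy SFTs have ergodic measures of arbitrarily small positive entropy with support everywhere, and these are non-atomic. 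For ergodicity: restrict $m_k$ to one irreducible component of its support that dominates the weak-$*$ position, or perturb $m_k$ within the transitive ambient SFT by mixing in a tiny amount of the (unique, by transitivity and positive entropy) measure of maximal entropy on a transitive subshift --- this makes the measure fully supported on a transitive SFT, hence (being Markov on an irreducible SFT) ergodic, at the cost of an arbitrarily small change in weak-$*$ position and in entropy.

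The main obstacle is the bookkeeping needed to secure \emph{simultaneously} all four requirements --- same support set $M$, ergodic, non-atomic, weak-$*$ convergence, and $h(\mu_n)\to h(\mu)$ --- because the natural devices for ergodicity (passing to an irreducible component) and for non-atomicity (insisting on positive entropy / full support on a transitive piece) pull in opposite directions when $h(\mu)$ is small or zero, and one must be careful that enlarging $M$ via Proposition \ref{supersyst} does not cost control of the weak-$*$ limit. I expect the resolution to be: first enlarge $M$ (if needed) to a transitive Markov set $M'\supseteq M$ with $M'\subseteq A$ of positive entropy; then write $\mu_n = (1-t_n)\, m_{k_n} + t_n\, \eta_n$ where $\eta_n$ is an ergodic non-atomic fully supported Markov measure on a fixed transitive SFT inside $M'$ and $t_n\to 0$ slowly enough that $h(\mu_n)\to h(\mu)$ (using concavity/affinity of entropy under convex combinations along with the fact that a convex combination of Markov measures on an irreducible SFT, once fully supported, is ergodic because it is itself a Gibbs state for a locally constant potential), fast enough relative to $k_n$ that the weak-$*$ limit is still $\mu$. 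Checking that this convex combination is genuinely ergodic (rather than merely invariant) is the one point requiring real care: I would justify it by noting that any fully supported Markov measure on an irreducible SFT is the unique equilibrium state of its defining locally constant potential, hence ergodic, and that $\mu_n$ is of this form for an appropriate potential. Everything else is routine SFT thermodynamic formalism.
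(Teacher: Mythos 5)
There is a genuine gap at the decisive step, namely ergodicity of your approximating measures. You propose $\mu_n=(1-t_n)\,m_{k_n}+t_n\,\eta_n$ and justify its ergodicity by asserting that ``a convex combination of Markov measures on an irreducible SFT, once fully supported, is ergodic because it is itself a Gibbs state for a locally constant potential.'' This is false: a nontrivial convex combination of two distinct invariant measures is \emph{never} ergodic, and a mixture of two Markov measures is in general neither Markov nor a Gibbs state for any locally constant (or H\"older) potential --- indeed such Gibbs states are ergodic, which is exactly what a nontrivial mixture cannot be. The same error occurs earlier where you say that mixing in a tiny amount of the measure of maximal entropy ``makes the measure fully supported on a transitive SFT, hence (being Markov on an irreducible SFT) ergodic'': full support does not restore ergodicity of a mixture. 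So as written, the construction produces invariant, fully supported measures with the right weak-$*$ and entropy behaviour, but not ergodic ones, and the lemma is not proved. (A secondary point: enlarging $M$ via Proposition \ref{supersyst} is not available here, since the statement requires the $\mu_n$ to be supported on $M$ itself.)

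The missing ingredient is an ergodic-approximation-with-entropy result applied \emph{after} the mixing, not an ergodicity claim about the mixture. The paper's route is: set $\nu_n=(1-n^{-1})\mu+n^{-1}\eta$ with $\eta$ the Parry measure of $M$, so that $\nu_n$ is invariant and fully supported on $M$ (no ergodicity claimed), and then invoke Lemma 4.2 of \cite{O}, which supplies Gibbs measures --- hence ergodic and non-atomic --- on $M$ converging to $\nu_n$ weak-$*$ and in entropy; a diagonal argument finishes the proof. Your block-Markovization $m_k$ and the entropy facts $h(m_k)\ge h(\mu)$, $h(m_k)\to h(\mu)$ are fine and could be made to work, but to repair the ergodicity step you would have to perturb at the level of \emph{transition matrices} (producing a single irreducible, fully supported Markov measure close to $m_k$ in weak-$*$ position and entropy), or else quote an entropy-density theorem for ergodic/Gibbs measures on transitive SFTs, rather than take a convex combination of measures.
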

\begin{proof}
Let $\eta$ be the measure of maximal entropy (Parry measure for M). We let $\nu_n=(1-n^{-1})\mu_n+n^{-1}\eta$. Note that $\nu_n$ is $T$-inv and fully supported on $M$. Thus we can apply Lemma 4.2 from \cite{O} to find a sequence of Gibbs measures (so non-atomic and ergodic) converging to $\nu_n$ weak* and in terms of entropy. The result now easily follows. 
\end{proof}
We can now deduce that $H_p$ is an interval

\begin{lem} \label{lem:inter}
$H_p$ is a (possibly empty or trivial) closed interval.
\end{lem}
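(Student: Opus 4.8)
The plan is to show that $H_p$ is closed and convex (connected), which for a subset of $\R$ gives that it is an interval (possibly empty or a single point). Recall $a\in H_p$ means there is a sequence of non-atomic ergodic measures $\mu_n$, each supported on a Markov subset of $A$, with $\int f\,d\mu_n\to a$ and $\int\phi\,d\mu_n\to 0$. The two things to establish are: (i) if $a_k\in H_p$ and $a_k\to a$, then $a\in H_p$; and (ii) if $a_0,a_1\in H_p$ and $a\in(a_0,a_1)$, then $a\in H_p$.

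For closedness (i), I would use a diagonal argument. For each $k$ pick, by definition of $H_p$, a non-atomic ergodic measure $\nu_k$ supported on a Markov subset with $|\int f\,d\nu_k - a_k|<1/k$ and $|\int\phi\,d\nu_k|<1/k$. Then $\int f\,d\nu_k\to a$ and $\int\phi\,d\nu_k\to 0$, so $a\in H_p$ directly; no compactness is even needed here.

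The substantive point is convexity (ii). Given $a_0,a_1\in H_p$ with witnessing sequences, fix $a=(1-t)a_0+ta_1$ for $t\in(0,1)$. Choose non-atomic ergodic measures $\nu_0,\nu_1$ supported on Markov sets $M_0,M_1$ with $\int f\,d\nu_j$ close to $a_j$ and $\int\phi\,d\nu_j$ close to $0$. By Proposition \ref{supersyst} there is a Markov set $M\in\mathcal M(A)$ with $M\supseteq M_0\cup M_1$, so both $\nu_0$ and $\nu_1$ are supported on the single Markov set $M$, and we may assume $T|_M$ is topologically transitive. Form the convex combination $\mu=(1-t)\nu_0+t\nu_1$: it is $T$-invariant, supported on $M$, with $\int f\,d\mu=(1-t)\int f\,d\nu_0+t\int f\,d\nu_1$ close to $a$ and $\int\phi\,d\mu$ close to $0$. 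Now apply Lemma \ref{invapprox} to $\mu$ on $M$: we obtain non-atomic ergodic measures $\mu_n$ supported on $M$ with $\mu_n\to\mu$ weak-$*$ (and convergence of entropies, which we do not even need). Weak-$*$ convergence gives $\int f\,d\mu_n\to\int f\,d\mu$ provided $f$ is continuous $\mu$-a.e., which holds since $f$ is regular hence has at most countably many discontinuities while $\mu$ is non-atomic; similarly $\int\phi\,d\mu_n\to\int\phi\,d\mu$ using that $\phi$ is regular except on the countable set $Z$ and $\mu$ is non-atomic. Letting the closeness parameters tend to $0$ (a diagonal choice over $n$ and over the approximations of $a_0,a_1$) produces a single sequence of non-atomic ergodic measures on Markov subsets with $f$-integrals tending to $a$ and $\phi$-integrals tending to $0$, i.e. $a\in H_p$.

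The main obstacle is the measure-theoretic bookkeeping around weak-$*$ convergence and the discontinuities of $f$ and $\phi$: one must be careful that $\int f\,d\mu_n\to\int f\,d\mu$ and $\int\phi\,d\mu_n\to\int\phi\,d\mu$ genuinely hold, which is why non-atomicity of all the measures involved (guaranteed by Lemma \ref{invapprox} and the definition of $H_p$) is essential, together with the regularity of $f$ and $\phi$. A secondary technical point is arranging everything on a common Markov set so that Lemma \ref{invapprox} applies; this is exactly what Proposition \ref{supersyst} provides. Once these are in place, the interval claim is immediate, and the "possibly empty or trivial" caveat simply covers the cases where no such measures exist or only one value $a$ is attainable.
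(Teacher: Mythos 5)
Your proposal is correct and follows essentially the same route as the paper: closedness directly from the definition by a diagonal argument, and convexity by placing the witnessing measures on a common Markov set via Proposition \ref{supersyst}, taking the convex combination, and approximating it by non-atomic ergodic measures using Lemma \ref{invapprox}. The only difference is that you spell out the step the paper leaves as ``easily follows'', namely that weak-$*$ convergence yields convergence of the $f$- and $\phi$-integrals because both functions are regular (hence have at most countably many discontinuities) and the limit measure is non-atomic, which is a worthwhile clarification but not a different argument.
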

\begin{proof}
The fact that $H_p$ is a closed set is immediate from the definition. To see it is an interval we will show that if $a_1,a_2\in H_p$ and $p\in (0,1)$ then $pa_1+(1-p)a_2\in H_p$.

To see this note that if $\mu$ and $\nu$ are both supported on subsets of Markov sets then we can find a Markov set $M$ such that $p\mu+(1-p)\nu$ support is contained on $M$. We can then apply Lemma \label{refapprox} to approximate the measure weak* and in terms of entropy. From this it easily follows that if        $a_1,a_2\in H_p$ and $p\in (0,1)$ then $pa_1+(1-p)a_2\in H_p$.
\end{proof}

Our lower bound for the dimension of $L_a$ when $a\in H_p$ can be deduced from the following lemma which allows us to make use of the $w$-measure approach from the first part of the section.
\begin{lem} \label{lem:endpoints}
For $a\in H_p$ there exist $\{a_i\}\in H, a_i\to a$ and ergodic measures $\{\nu_i\}$ such that $\int f d\mu_i = a_i$, $\lambda(\nu_i) \to 0$ , $\lambda(\nu_i)>0$,  $\rm{supp}(\nu_i)\subset M_i$ where $M_i$ is Markov and $\dim_H \nu_i \to \dim_{\rm{hyp}}(A)$.
\end{lem}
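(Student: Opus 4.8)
The plan is to produce, for each target value $a\in H_p$, a sequence of ergodic measures on Markov sets whose $f$-integrals approach $a$, whose Lyapunov exponents are positive but tend to $0$, and whose dimensions approach $\dim_{\rm hyp}(A)$. The natural way to get dimension close to $\dim_{\rm hyp}(A)$ is to fix a Markov set $N\subset A$ with $\dim N$ close to $\dim_{\rm hyp}(A)$ (possible by the definition of $\dim_{\rm hyp}$) and take its measure of maximal dimension, or rather a hyperbolic ergodic measure $\sigma$ on $N$ with $h(\sigma)/\lambda(\sigma)$ close to $\dim N$. To push the $f$-integral toward $a$ while driving $\lambda$ to $0$, I would exploit the defining property of $H_p$: there is a sequence of non-atomic ergodic measures $\eta_n$, each supported on a Markov subset of $A$, with $\int f\,d\eta_n\to a$ and $\lambda(\eta_n)\to 0$. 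First I would replace the pair $(\sigma,\eta_n)$ by measures living on a common Markov set: by Proposition \ref{supersyst} there is a Markov set $M_n\in\mathcal M(A)$ containing the supports of both, so that convex combinations $\rho_{n,t}=t\sigma+(1-t)\eta_n$ are $T$-invariant measures on $M_n$.

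Next I would take the convex combination parameter $t=t_n\to 0$ slowly, so that $\lambda(\rho_{n,t_n})=t_n\lambda(\sigma)+(1-t_n)\lambda(\eta_n)\to 0$ (using $\lambda(\sigma)<\infty$ and $\lambda(\eta_n)\to 0$, choosing $t_n$ small enough relative to $\lambda(\eta_n)$), while at the same time $\int f\,d\rho_{n,t_n}=t_n\int f\,d\sigma+(1-t_n)\int f\,d\eta_n\to a$, and — by affinity of entropy — $h(\rho_{n,t_n})=t_n h(\sigma)+(1-t_n)h(\eta_n)$. The point of keeping a small positive weight on $\sigma$ is that it forces $\lambda(\rho_{n,t_n})>0$ strictly. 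Then I would apply Lemma \ref{invapprox} to each $\rho_{n,t_n}$: it gives ergodic, non-atomic measures on $M_n$ approximating $\rho_{n,t_n}$ weak-$*$ and in entropy, and by weak-$*$ continuity of $\mu\mapsto\int f\,d\mu$ (valid since $f$ is discontinuous on at most a countable set, as used in the proof of Lemma \ref{lem:lowerint}) and of $\mu\mapsto\lambda(\mu)$ on Markov sets, a diagonal extraction yields a single sequence of ergodic non-atomic measures $\nu_i$ on Markov sets $M_i$ with $\int f\,d\nu_i=:a_i\to a$, $\lambda(\nu_i)\to 0$, $\lambda(\nu_i)>0$, and $h(\nu_i)\to$ the limit of $h(\rho_{n,t_n})$, hence (via $\dim_H\nu_i=h(\nu_i)/\lambda(\nu_i)$ from \cite{HR}, but note $\lambda(\nu_i)\to 0$ so one must argue at the level of $\sigma$) $\dim_H\nu_i$ can be made to approach $\dim N$, and then $\dim_{\rm hyp}(A)$ by taking $N$ with $\dim N$ close enough.

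The main obstacle is the tension between $\lambda(\nu_i)\to 0$ and $\dim_H\nu_i\to\dim_{\rm hyp}(A)$: the dimension $h(\nu)/\lambda(\nu)$ is a ratio of two quantities both going to $0$, so one cannot naively expect the ratio to stay close to $\dim\sigma$. The delicate part of the argument is therefore the rate at which $t_n\to 0$: one needs $t_n$ small enough that $t_n\lambda(\sigma)+(1-t_n)\lambda(\eta_n)\to0$, yet the ratio $\frac{t_nh(\sigma)+(1-t_n)h(\eta_n)}{t_n\lambda(\sigma)+(1-t_n)\lambda(\eta_n)}$ stays close to $\dim_{\rm hyp}(A)$. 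Because $\eta_n$ sits on a Markov set, $h(\eta_n)\le\lambda(\eta_n)$, so the contribution of $\eta_n$ to the ratio is at most $1$; one should instead choose $t_n$ so that $t_n\lambda(\sigma)$ dominates $(1-t_n)\lambda(\eta_n)$ — i.e. $t_n\gg\lambda(\eta_n)$ — which is compatible with $t_n\to0$ precisely because $\lambda(\eta_n)\to0$; then the ratio is driven by the $\sigma$-term and stays near $h(\sigma)/\lambda(\sigma)$. Once this window for $t_n$ is identified the rest is a routine diagonalization together with the already-quoted approximation results.
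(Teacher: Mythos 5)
Your proposal is correct and is essentially the paper's own argument: the paper likewise takes a convex combination of a hyperbolic ergodic measure of dimension near $\dim_{\rm hyp}(A)$ with a small-exponent measure near $a$ coming from the definition of $H_p$, with the weight on the hyperbolic measure chosen to dominate the Lyapunov exponent of the parabolic part (your condition $t_n\gg\lambda(\eta_n)$ is exactly the paper's $\lambda(\mu)\le s^{-1}\varepsilon^2\lambda(\nu)$), and then invokes Proposition \ref{supersyst} and Lemma \ref{invapprox} to pass to non-atomic ergodic measures. The only cosmetic difference is that you phrase the limit via a diagonal extraction over a sequence rather than letting a single parameter $\varepsilon\to 0$.
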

\begin{proof}
We will let $s=\dim_{\rm{hyp}}(A)$.
Let $\varepsilon>0$ and let $\nu$ be a ergodic measure with Markov support such that $h(\nu)/\lambda(\nu)>s-\varepsilon$ and $\int f\rm{d}\nu=b$ for some $b\in H$ . We can find $\mu$ an ergodic measure with support  contained in a Markov set such that 
$$\left|\int f\rm{d}\mu-a\right|\leq\epsilon$$
and 
$$\lambda(\mu)\leq s^{-1}\epsilon^2\lambda(\mu).$$
Thus by Proposition \ref{supersyst} $(1-\varepsilon)\mu+\varepsilon\nu$ will be an invariant measure with support contained in a Markov set and with no atomic part. Moreover
$$h((1-\varepsilon)\mu+\varepsilon\nu)\geq \varepsilon h(\nu)\geq (s-2\varepsilon)\varepsilon \lambda(\nu)+s(1-\varepsilon)\lambda(\mu)$$
Thus if we let $\eta=(1-\varepsilon)\mu+\varepsilon\nu$ we have an invariant measure with support contained in a Markov set with
$$\left|\int f\rm{d}\eta-a\right|\leq \epsilon|b-a|$$
and
$$h(\eta)/\lambda(\eta)>s-2\varepsilon.$$
We can now apply Lemma \ref{invapprox}
 to find a non-atomic ergodic measure $\tilde{\nu}$ with
$$\left|\int f\rm{d}\tilde{\nu}-a\right|\leq 2\epsilon|b-a|$$
and
$$h(\tilde{\nu})/\lambda(\tilde{\nu})>s-4\varepsilon.$$
The result follows by letting $\varepsilon\to 0$.
\end{proof}

We can now obtain the lower bound in $H_p$: applying Theorem \ref{thm:w} to the sequence of measures $\{\nu_i\}$ given by Lemma \ref{lem:endpoints} we get $\dim_H L_a \geq \dim_{\rm hyp} A$. 
We finish the section by dealing with the case $\dim_H L_a\cap D_0$ when $a\in\rm{int} (H)$.

\begin{cor}
If $a\in\rm{int} (H)$ then $\delta_0(a)=\dim_{\rm{hyp}} (A)$ and thus $\dim_H L_a\cap D_0=\dim_{\rm{hyp}}(A)$.
\end{cor}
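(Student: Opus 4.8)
The plan is to reduce the whole statement to the single inequality $\delta_0(a)\ge\dim_{\rm hyp}(A)$. [Here $\inte(H)$ should be read as $\inte(H_p)$, as in Theorem~\ref{thm:main}(3); note $\inte(H_p)\subseteq\inte(H)$.] Write $s=\dim_{\rm hyp}(A)$. The reverse inequality $\delta_0(a)\le s$ is free for every $a\in H$, since $\dim_{\rm hyp}(A)=\sup_{a\in H}\delta_0(a)$ as recorded just before Theorem~\ref{thm:main}; and once $\delta_0(a)=s$ is known, the identity $\dim_H(L_a\cap D_0)=\delta_0(a)=s$ is precisely part~(1) of Theorem~\ref{thm:main}, which has already been proved for points of $\inte(H)$ in Section~4. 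So I only need a lower bound for $\delta_0(a)$, and I would get it by producing an invariant measure on a Markov subset of $A$ whose $f$-integral equals $a$ \emph{exactly} and whose ratio $h/\lambda$ is as close to $s$ as we please.

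To build such a measure, fix $\varepsilon>0$ and start with a hyperbolic ergodic measure $\nu$ with Markov support and $h(\nu)/\lambda(\nu)>s-\varepsilon$ (possible by the description of $s$ as the supremum of dimensions of hyperbolic ergodic measures on Markov subsets). Since $a\in\inte(H_p)$, choose $a_1<a<a_2$ in $H_p$; by the definition of $H_p$ there are, for $j=1,2$, non-atomic ergodic measures $\mu_j$ on Markov sets with $\int f\,d\mu_j$ arbitrarily close to $a_j$ and $\lambda(\mu_j)$ arbitrarily small. By Proposition~\ref{supersyst} we may assume $\nu,\mu_1,\mu_2$, hence every convex combination of them, are supported on a single Markov set $M$. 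Now I would set $\eta=p_1\mu_1+p_2\mu_2+p_3\nu$ with $p_i\ge0$, $p_1+p_2+p_3=1$, choosing the weights so that $\int f\,d\eta=a$ on the nose; this can be done with $p_3$ equal to a fixed constant $c_0>0$, because once $\int f\,d\mu_1,\int f\,d\mu_2$ are close enough to $a_1,a_2$ the number $a$ lies strictly between them, so the linear system $p_1\!\int\!f\,d\mu_1+p_2\!\int\!f\,d\mu_2=a-c_0\!\int\!f\,d\nu$, $p_1+p_2=1-c_0$, has a nonnegative solution. Since entropy is affine and $\mu\mapsto\int\phi\,d\mu$ is linear on invariant measures, $h(\eta)\ge c_0 h(\nu)$ and $\lambda(\eta)=p_1\lambda(\mu_1)+p_2\lambda(\mu_2)+c_0\lambda(\nu)$, so taking $\lambda(\mu_1),\lambda(\mu_2)$ small enough yields $\lambda(\eta)<(1+\varepsilon)c_0\lambda(\nu)$; in particular $\lambda(\eta)>0$ and $h(\eta)/\lambda(\eta)>(s-\varepsilon)/(1+\varepsilon)$.

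It then remains to feed $\eta$ into the pressure. For any $\delta<h(\eta)/\lambda(\eta)$ and any $q\in\R$, the variational principle for the Hofbauer pressure on $M$ (the regularity of $f$ and $\phi$ lets one approximate $q(f-a)-\delta\phi$ from below by continuous functions with $\eta$-integral as close as desired) gives
\[
P\big(q(f-a)-\delta\phi\big)\ \ge\ h(\eta)+q\Big(\int f\,d\eta-a\Big)-\delta\,\lambda(\eta)\ =\ \lambda(\eta)\Big(\frac{h(\eta)}{\lambda(\eta)}-\delta\Big)\ >\ 0,
\]
using $\int f\,d\eta=a$. Hence $\inf_{q\in\R}P(q(f-a)-\delta\phi)>0$, i.e.\ $\delta\le\delta_0(a)$; letting $\delta\uparrow h(\eta)/\lambda(\eta)$ and then $\varepsilon\downarrow0$ gives $\delta_0(a)\ge s$, finishing the proof.

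The delicate point is the exact constraint $\int f\,d\eta=a$. Lemma~\ref{invapprox} and weak-$*$ approximation only produce measures with $\int f$ converging to $a$, because for a merely regular potential $\mu\mapsto\int f\,d\mu$ is not weak-$*$ continuous; and since $\delta_0$ is only lower (not upper) semicontinuous, one cannot recover $\delta_0(a)$ as a limit of $\delta_0(a_i)$ along some $a_i\to a$. The three-measure device above---two arbitrarily-small-exponent measures from $H_p$ straddling $a$, together with one near-optimal hyperbolic measure, combined with \emph{weights} rather than perturbed integrals---is what meets this constraint, and once ergodicity is dropped the variational principle handles the rest routinely.
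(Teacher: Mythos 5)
Your proof is correct in substance but follows a genuinely different route from the paper's. The paper never constructs a measure with $\int f\,d\eta=a$ exactly: by Lemma \ref{lem:endpoints} (applied on either side of $a$ inside $H_p$ --- which is where the reading $a\in\inte(H_p)$ enters, exactly as you note) and Proposition \ref{supersyst}, it produces two non-atomic \emph{ergodic} measures $\mu,\nu$ on a common Markov set with $\int f\,d\mu>a$, $\int f\,d\nu<a$ and $h/\lambda>\delta=\dim_{\rm hyp}(A)-\epsilon$ for both; then for $q\ge 0$ the variational principle with $\mu$ gives $P(q(f-a)-\delta\phi)\ge h(\mu)+q\int(f-a)\,d\mu-\delta\lambda(\mu)>0$ because the linear term is nonnegative, and for $q<0$ one uses $\nu$ instead. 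This sign-splitting device sidesteps precisely the ``delicate point'' you single out, and it keeps all measures ergodic, so Hofbauer's variational principle (Lemma 8 of \cite{H2}) applies verbatim. Your alternative --- one non-ergodic combination $\eta=p_1\mu_1+p_2\mu_2+c_0\nu$ with $\int f\,d\eta=a$ on the nose --- also works: the classical variational principle on the common Markov set holds for all invariant measures, entropy is affine, and the approximation of $q(f-a)-\delta\phi$ from below by continuous functions is legitimate because $\eta$ is non-atomic while $f$ and $\phi$ have only countably many discontinuities; it even yields a lower bound on $P$ uniform in $q$ with no case distinction and without needing Lemma \ref{invapprox} at this stage. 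Two points to tighten: solvability of your weight system with a \emph{fixed} $c_0>0$ needs more than $\int f\,d\mu_1<a<\int f\,d\mu_2$ --- you need $a+\tfrac{c_0}{1-c_0}\bigl(a-\int f\,d\nu\bigr)$ to lie between $\int f\,d\mu_1$ and $\int f\,d\mu_2$, which holds once $c_0$ is chosen small in terms of $\min(a-a_1,a_2-a)$ and $\sup|f|$ (still before, and independently of, shrinking $\lambda(\mu_1),\lambda(\mu_2)$); and you should record that $\nu$ is non-atomic (it has positive entropy), so that $\eta$ charges no discontinuity points, which is what your approximation step actually uses. Your treatment of the upper bound $\delta_0(a)\le\dim_{\rm hyp}(A)$ via the remark preceding Theorem \ref{thm:main} and the reduction of the dimension statement to part (1) of that theorem coincides with what the paper leaves implicit in ``the result now follows.''
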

\begin{proof}
Let $\delta=\dim_{\rm{hyp}}(A)-\epsilon$ for some $\epsilon>0$. By Lemma \ref{lem:endpoints} and Proposition \ref{supersyst} we can find non-atomic ergodic measures $\mu$ and $\nu$ supported on the same Markov set   such that $\int f d\mu>a$, $\int f d\nu<a$ and $\min\{\frac{h(\mu)}{\lambda(\mu)},\frac{h(\nu)}{\lambda(\nu)}\}>\delta$. Thus for $q\geq 0$ we have that
$$\int q(f-a)+\delta\phi\rm{d}\mu>0$$
and for $q<0$ we have that
 $$\int q(f-a)+\delta\phi\rm{d}\nu>0.$$
 Thus by the variational principle (Lemma 8 \cite{H2}) $P(q(f-a)-\delta\phi)>0$ for all $q\in\R$. Since $a\in \rm{int}(H)$ we also know that $\lim_{q\to\pm\infty}P(q(f-a)-\delta\phi)=\infty$ and so $\delta_0(a)>\delta$. The result now follows.
\end{proof}
\section{Further properties of the Birkhoff spectrum}
The final step of the proof of Theorem \ref{thm:main}  to prove part 4 of the statement, that is the results on the continuity properties of the spectrum.

\begin{prop} \label{prop:upend}
The function $a\to \dim_H L_a \cap D_0$ is lower semicontinuous in $H_p$ and continuous in $H_h$.
\end{prop}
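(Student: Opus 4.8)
The strategy is to derive all four continuity properties from the variational characterization established earlier, namely $\dim_H L_a\cap D_0=\delta_0(a)$ for $a\in H$ together with the formula
$\delta_0(a)=\sup\{h(\mu)/\lambda(\mu):\mu\ T\text{-ergodic},\ \int f\,d\mu=a\}$ on $\inte(H)$, and from the behaviour of the pressure function $P(a,q,\delta)=P(q(f-a)-\delta\phi)$. The key structural observation is that $P(a,q,\delta)$ depends \emph{jointly} and continuously on $(a,q)$ for each fixed $\delta$ --- indeed $P(a',q,\delta)=q(a'-a)+P(a,q,\delta)$, so moving $a$ only tilts the graph of $q\mapsto P$ by an affine term, and $P(a,q,\delta)$ is convex in $q$. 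Consequently, for fixed $\delta$ the quantity $\inf_{q\in\R}P(a,q,\delta)$ is a concave function of $a$ (an infimum of affine-in-$a$ functions), hence continuous on the interior of its domain, and upper semicontinuous everywhere. From the definition $\delta_0(a)=\sup\{\delta:\inf_q P(a,q,\delta)>0\}$ one reads off lower semicontinuity of $a\mapsto\delta_0(a)$ directly: if $\delta_0(a_0)>\delta$ then $\inf_q P(a_0,q,\delta)>0$, and since $a\mapsto\inf_q P(a,q,\delta)$ is continuous at $a_0$ (using $a_0\in\inte(H)$; the endpoint case needs a separate one-sided argument or the limiting-measure construction of Theorem~\ref{thm:w}), we get $\inf_q P(a,q,\delta)>0$ for $a$ near $a_0$, whence $\delta_0(a)\geq\delta$. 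This gives lower semicontinuity on all of $H$, covering the first half of the statement and part (4) of Theorem~\ref{thm:main}.

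For the continuity on $H_h$, I would argue that on the hyperbolic part the infimum $\inf_q P(a,q,\delta_0(a)-\epsilon)$ stays strictly positive while $\inf_q P(a,q,\delta_0(a)+\epsilon)\leq 0$, and the key extra input is that on $H_h$ one can \emph{also} bound $\delta_0$ from above in a way that varies upper-semicontinuously. The cleanest route: take a near-optimal ergodic measure $\mu$ with $\int f\,d\mu=a$ and $h(\mu)/\lambda(\mu)$ close to $\delta_0(a)$; since $a\in H_h$ we may take $\lambda(\mu)$ bounded below by a constant depending only on a neighbourhood of $a$ (this is where hyperbolicity of $H_h$ enters --- on $H_p$ the Lyapunov exponents of optimal measures can degenerate, which is precisely why continuity fails there and only lower semicontinuity survives). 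Then perturbing $a$ slightly and using weak-$*$ continuity of equilibrium states together with the fact that $f$ has at most countably many discontinuities, one produces for nearby $a'$ an ergodic measure with $\int f\,d\mu'=a'$ and $h(\mu')/\lambda(\mu')$ close to $h(\mu)/\lambda(\mu)$; this is essentially a repetition of the mechanism in the proof of Lemma~\ref{lem:lowerint}. Combined with lower semicontinuity already proved, this yields continuity on $H_h$.

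The order of steps I would carry out: (i) record the joint continuity and convexity-in-$q$ of $P(a,q,\delta)$ and deduce concavity/continuity of $a\mapsto\inf_q P(a,q,\delta)$ on interiors; (ii) deduce lower semicontinuity of $\delta_0$ on $\inte(H)$, then extend to the endpoints using that $L_a\cap D_0$ for an endpoint $a\in H_p$ may be empty (the convention $\dim_H\emptyset=-\infty$ makes lower semicontinuity automatic there) and using the endpoint construction of Section~5 when $\delta_0(a)\neq-\infty$; (iii) on $H_h$, establish the uniform lower bound on Lyapunov exponents of near-optimal measures over a neighbourhood, then run the perturbation-of-equilibrium-state argument to get the matching upper semicontinuity. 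I expect step (iii), specifically the uniform bound $\lambda(\mu)\geq c>0$ for near-optimal measures on a neighbourhood inside $H_h$, to be the main obstacle: it requires showing that if $a_n\to a\in H_h$ and $\mu_n$ are near-optimal for $a_n$ with $\lambda(\mu_n)\to 0$, then by the limiting-measure construction (Theorem~\ref{thm:w} applied to these $\mu_n$, or a compactness argument on the $\mu_n$ themselves) one would exhibit a sequence witnessing $a\in H_p$, contradicting $a\in H_h$ --- making the contradiction airtight, and handling the possibility that $\dim_H\mu_n$ stays large while $\lambda(\mu_n)\to 0$ but $h(\mu_n)$ also $\to 0$, is the delicate point.
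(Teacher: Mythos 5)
Your lower-semicontinuity half is essentially sound: exploiting that $P(a,q,\delta)=P(qf-\delta\phi)-qa$ is affine in $a$ for fixed $q$, so that $a\mapsto\inf_q P(a,q,\delta)$ is concave, is a legitimate (and arguably cleaner) alternative to the paper's argument, which instead works directly with the sets $\Delta_i=\{q:P(a_i,q,\delta+\varepsilon)\leq 0\}$, their monotonicity in $i$ and compactness, to extract one common $q$ and pass to the limit. Two caveats: at an endpoint of $H$ your pointer to ``the endpoint construction of Section~5'' is the wrong tool (that construction proves the lower bound $\dim_H L_a\cap D_0\geq\delta_0(a)$ at the endpoint, not semicontinuity of the spectrum there); the correct fix is a one-sided concavity estimate, $F_\delta(a_i)\geq(1-t_i)F_\delta(a_0)+t_iF_\delta(a)\to F_\delta(a)>0$ for $a_i$ between an interior point $a_0$ and the endpoint $a$, which you did not spell out. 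Also note that an endpoint of $H$ need not lie in $H_p$, so the ``$\dim_H\emptyset=-\infty$ makes it automatic'' remark does not cover all endpoint cases.

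The genuine gap is in the upper-semicontinuity direction on $H_h$. Your mechanism runs the wrong way: starting from a near-optimal measure $\mu$ at $a$ and perturbing to produce measures $\mu'$ at nearby $a'$ with $h(\mu')/\lambda(\mu')$ close to $h(\mu)/\lambda(\mu)$ only yields $\delta_0(a')\geq\delta_0(a)-\epsilon$, i.e.\ lower semicontinuity again; ``combined with lower semicontinuity already proved'' this does not give continuity, since you never bound $\delta_0(a')$ \emph{above} by $\delta_0(a)+\epsilon$. What is needed is a transfer from the nearby points back to $a$: take $a_i\to a$, $\delta_i\to\delta$ with $\inf_q P(a_i,q,\delta_i)>0$, use Lemma~\ref{lem:lowerint} to get ergodic $\mu_i$ with $\int f\,d\mu_i=a_i$ and $h(\mu_i)/\lambda(\mu_i)\geq\delta_i$, invoke the uniform bound $\lambda(\mu_i)>\varepsilon'$ coming from $a\notin H_p$ (this part you did identify correctly), and then observe that for \emph{every} $q$,
\[
P(a_i,q,\delta-\varepsilon)\geq h(\mu_i)+q\int(f-a_i)\,d\mu_i-(\delta-\varepsilon)\lambda(\mu_i)
=\Bigl(\tfrac{h(\mu_i)}{\lambda(\mu_i)}-\delta+\varepsilon\Bigr)\lambda(\mu_i)\geq\tfrac12\varepsilon\varepsilon',
\]
the point being that $\int f\,d\mu_i=a_i$ kills the $q$-term so the bound is uniform in $q$; letting $i\to\infty$ at fixed $q$ (affineness in $a$ again) gives $\inf_q P(a,q,\delta-\varepsilon)>0$, hence $\delta_0(a)\geq\delta-\varepsilon$, which is the required upper semicontinuity. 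Your alternative of perturbing equilibrium states from $a_i$ to $a$ would in principle also work but needs uniform-in-$i$ control that you have not addressed, and it is exactly what the pressure/variational-principle route above avoids; as written, your step (iii) establishes the key Lyapunov bound and then feeds it into an argument that cannot deliver the conclusion.
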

\begin{proof}
To prove the lower semicontinuity at $a$ it is enough to prove that if $a_i\to a$, $\delta_i \to \delta$, and $\inf_q P(a_i, q, \delta_i) \leq 0$ then $\inf_q P(a, q, \delta + \varepsilon) \leq 0$ for all $\varepsilon >0$. 

Assume for simplicity that $a_i\nearrow a$ (the case $a_i \searrow a$ is done analogously, and by choosing a subsequence we can always make one of those cases to happen). We can also freely assume that $a_i$ are in the interior of $H$. Fix some $\varepsilon >0$, then $\delta+\varepsilon > \delta_i$, and hence also $\inf_q P(a_i, q, \delta + \varepsilon) \leq 0$, for sufficiently large $i$.

Consider the set 

\[
\Delta_i = \{q; P(a_i, q, \delta + \varepsilon) \leq 0\}
\]

and divide it into two subsets $\Delta_i^+ = \Delta \cap (0,\infty)$ and $\Delta_i^- = \Delta \cap (-\infty,0]$. As $P(a', q, \delta) = P(a, q, \delta) + q\cdot (a-a')$, the sequence $\{\Delta_i^+\}$ is decreasing and the sequence $\{\Delta_i^-\}$ is increasing. Moreover, as $a_i$ is in the interior of $H$, $\Delta_i$ is always a compact set. Thus, $\Delta_i^-$ is compact as well.

Our assumption is that all the sets $\Delta_i$ are nonempty. If $\Delta_\ell^+$ is nonempty for some $\ell$, we can take some $q\in \Delta_\ell^-$ and this $q$ will also belong to all $\Delta_i^-$ for $i>\ell$. Hence,

\begin{equation} \label{eqn:lsc}
P(a, q, \delta+ \varepsilon) = \lim P(a_i, q, \delta+ \varepsilon) \leq 0.
\end{equation}

On the other hand, if $\Delta_i^+$ is nonempty for all $i$ then $\Delta_i^-$ must be nonempty, also for all $i$. Thus, $\{\Delta_i^-\}$ forms a decreasing family of nonempty compact sets, hence it has nonempty intersection, and for any $q\in \bigcap \Delta_i^-$ inequality \eqref{eqn:lsc} holds as well. 

Consider now $a\in H\setminus H_p$. To prove upper semicontinuity at $a$ it is enough to prove that if $a_i\to a$, $\delta_i \to \delta$, and $\inf_q P(a_i, q, \delta_i) > 0$ then $\inf_q P(a, q, \delta - \varepsilon) > 0$ for all $\varepsilon >0$.

As $a\notin H_p$, there exists $\varepsilon'>0$ such that for every measure $\mu$ satisfying $\int f d\mu \in (a-\varepsilon', a+\varepsilon')$ we have $\lambda(\mu) > \varepsilon'$. Naturally, $a_i \in (a-\varepsilon', a+\varepsilon')$ and $\delta_i > \delta-\varepsilon/2$ for $i$ large enough.

By Lemma \ref{lem:lowerint}, for all those $i$ we can find an ergodic measure $\mu_i$ with $\int f d\mu_i = a_i$ and $h(\mu_i)/\lambda(\mu_i) \geq \delta_i$. Moreover, by the above, $\lambda(\mu_i)>\varepsilon'$. Thus, for every $q\in \R$ we have 

\[
P(a_i, q, \delta - \varepsilon) \geq h(\mu_i) + \int q(f-a_i)d\mu_i - (\delta-\varepsilon)\lambda(\mu_i) = \left(\frac {h(\mu_i)} {\lambda(\mu_i)}  -\delta+\varepsilon\right) \lambda(\mu_i) \geq \frac 12 \varepsilon \varepsilon'.
\]

Thus,

\[
P(a, q, \delta - \varepsilon) = \lim_i P(a_i, q, \delta - \varepsilon) \geq \frac 12 \varepsilon \varepsilon'.
\]
\end{proof}

We know that in general $a\to \dim_H L_a\cap D_0$ will not be concave but we can present a result which shows that $a\to \dim_H L_a\cap D_0$ is piecewise monotone with at most two pieces.
\begin{prop} \label{prop:unimod}
For any $a_1 < a_2 < a_3$, 
\[
\dim_H L_{a_2} \cap D_0 \geq \min(\dim_H L_{a_1} \cap D_0, \dim_H L_{a_3} \cap D_0)
\]
\end{prop}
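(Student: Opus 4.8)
The plan is to prove this "unimodality" statement by interpolating measures. Recall from Corollary after Lemma~\ref{lem:lowerint} that for $a\in\inte(H)$ we have $\dim_H L_a\cap D_0=\delta_0(a)=\sup\{h(\mu)/\lambda(\mu):\mu\text{ $T$-ergodic},\ \int f\,d\mu=a\}$, and from Theorem~\ref{thm:main}(1) that for endpoints $a\in H$ the same equals $\delta_0(a)$ (with the convention $\dim_H\emptyset=-\infty$). So it suffices to show that for $a_1<a_2<a_3$ in $H$,
\[
\delta_0(a_2)\geq\min(\delta_0(a_1),\delta_0(a_3)).
\]
If the right-hand side is $-\infty$ there is nothing to prove, so assume $\delta_0(a_1),\delta_0(a_3)>-\infty$ and fix $\delta<\min(\delta_0(a_1),\delta_0(a_3))$. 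By the variational characterization (together with Lemma~\ref{invapprox} to pass to ergodic non-atomic measures with Markov support if needed), there exist ergodic measures $\mu_1,\mu_3$ with $\int f\,d\mu_j=a_j$ and $h(\mu_j)/\lambda(\mu_j)>\delta$, i.e.\ $h(\mu_j)-\delta\lambda(\mu_j)>0$ for $j=1,3$.

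Next I would use Proposition~\ref{supersyst} to place both $\mu_1$ and $\mu_3$ on a common Markov set $M$, and consider the one-parameter family $\mu_t=(1-t)\mu_1+t\mu_3$ for $t\in[0,1]$. Each $\mu_t$ is $T$-invariant, supported on $M$, and by affineness of entropy and of integration,
\[
h(\mu_t)-\delta\lambda(\mu_t)=(1-t)\bigl(h(\mu_1)-\delta\lambda(\mu_1)\bigr)+t\bigl(h(\mu_3)-\delta\lambda(\mu_3)\bigr)>0
\]
for every $t\in[0,1]$, and $\int f\,d\mu_t=(1-t)a_1+t a_3$ runs continuously over $[a_1,a_3]$. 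Hence there is $t_0$ with $\int f\,d\mu_{t_0}=a_2$ and $h(\mu_{t_0})-\delta\lambda(\mu_{t_0})>0$. Applying Lemma~\ref{invapprox} to $\mu_{t_0}$ yields a non-atomic ergodic measure $\tilde\nu$ supported on $M$ with $\int f\,d\tilde\nu$ arbitrarily close to $a_2$, $\lambda(\tilde\nu)$ close to $\lambda(\mu_{t_0})$, and $h(\tilde\nu)/\lambda(\tilde\nu)>\delta$. Since $\mu_{t_0}$ has $h(\mu_{t_0})>0$ (as $h(\mu_{t_0})>\delta\lambda(\mu_{t_0})\geq 0$ and $\lambda(\mu_{t_0})>0$ because $M$ is hyperbolic), its entropy is positive, so the approximating $\tilde\nu$ are genuine hyperbolic ergodic measures. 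By the Hofbauer--Raith formula $\dim_H\tilde\nu=h(\tilde\nu)/\lambda(\tilde\nu)$, and the support of such a measure lies in $D_0$ (positive Lyapunov exponent) with Birkhoff average $\int f\,d\tilde\nu$.

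To conclude I would handle the fact that $\tilde\nu$ integrates $f$ only approximately to $a_2$: either $a_2\in\inte(H)$, in which case I directly get $\delta_0(a_2)=\sup h/\lambda\geq\delta$ by a further small perturbation argument exactly as in the Corollary following Lemma~\ref{lem:lowerint} (one can perturb $t_0$ slightly in both directions and use continuity of $t\mapsto\int f\,d\mu_t$ to hit $a_2$ exactly, keeping $h-\delta\lambda>0$), or $a_2$ is an endpoint of $H$, which cannot happen since $a_1<a_2<a_3$ all lie in $H$ and $H$ is an interval (Proposition~\ref{supersyst}), so $a_2\in\inte(H)$ automatically. Thus $\delta_0(a_2)\geq\delta$ for every $\delta<\min(\delta_0(a_1),\delta_0(a_3))$, giving $\delta_0(a_2)\geq\min(\delta_0(a_1),\delta_0(a_3))$ and hence the claim. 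The main obstacle is purely bookkeeping: ensuring the interpolated measure can be made ergodic and non-atomic while keeping the strict inequality $h-\delta\lambda>0$ and the integral exactly at $a_2$; this is where Lemma~\ref{invapprox}, the affineness of entropy, and the continuity of the relevant functionals do all the work, and no new ideas beyond those already developed for parts (1)--(2) of Theorem~\ref{thm:main} are needed.
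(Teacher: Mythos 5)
Your route is genuinely different from the paper's. The paper proves the proposition in two lines at the level of pressure: fixing $\delta<\min(\dim_H L_{a_1}\cap D_0,\dim_H L_{a_3}\cap D_0)$, it has $P(a_1,q,\delta)>0$ and $P(a_3,q,\delta)>0$ for all $q$, and then uses the affine identity $P(a_2,q,\delta)=P(a_1,q,\delta)+q(a_2-a_1)=P(a_3,q,\delta)+q(a_2-a_3)$; since $a_2-a_1$ and $a_2-a_3$ have opposite signs, for $q\geq 0$ the first form and for $q\leq 0$ the second form give $P(a_2,q,\delta)>0$ for every $q$, hence $\delta_0(a_2)\geq\delta$ and the claim follows from part (1) of Theorem \ref{thm:main}. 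You work instead one layer down, with measures: interpolating $\mu_1,\mu_3$ on a common Markov set (Proposition \ref{supersyst}), using affineness of entropy, and re-ergodicizing with Lemma \ref{invapprox}. This is the same variational idea, but carried out where it is more delicate, and two of your steps do not close as written.

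First, ergodic measures with $\int f\,d\mu_j$ \emph{exactly} equal to $a_j$, Markov support and $h/\lambda>\delta$ are provided by the paper only for $a_j\in\inte(H)$ (Lemma \ref{lem:lowerint}; part (2) of Theorem \ref{thm:main} is stated only on the interior). If $a_1$ or $a_3$ is an endpoint of $H$ such a measure need not exist --- this is exactly why the paper's endpoint lower bound goes through the $w$-measure construction --- and Lemma \ref{invapprox} cannot confer Markov support on an arbitrary ergodic measure, since it only approximates measures already carried by a Markov set. (The repair is mild: you only need ergodic Markov-supported measures with $h/\lambda>\delta$ and integrals strictly on either side of $a_2$, which the endpoint analysis of Section 5 supplies.) Also, $\lambda(\mu_{t_0})>0$ does not follow from ``$M$ is hyperbolic'' --- Markov sets need not be uniformly hyperbolic, e.g.\ $A$ itself may be Markov and contain a parabolic point --- but it does follow from $\lambda(\mu_1),\lambda(\mu_3)>0$. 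Second, and more seriously, your final step does not yield an ergodic measure with integral exactly $a_2$: perturbing $t_0$ only reproduces non-ergodic convex combinations with integral $a_2$, and each application of Lemma \ref{invapprox} again shifts the integral. The correct way to finish from your two straddling ergodic measures is to pass back to pressure: test $P(a_2,q,\delta)$ for $q\geq 0$ against the measure with integral above $a_2$ and for $q\leq 0$ against the one below, obtaining $\inf_q P(a_2,q,\delta)>0$ (as in the Corollary at the end of Section 5), and then invoke Lemma \ref{lem:lowerint} or part (1) of Theorem \ref{thm:main}. At that point you have in effect re-derived the paper's inequality, which it obtains directly from the affine dependence of $P(a,q,\delta)$ on $a$ without constructing any measures.
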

\begin{proof}
Fix $\delta < \min(\dim_H L_{a_1} \cap {D}_0, \dim_H L_{a_3} \cap {D}_0)$. We have for any $q\in\R$, 
$P(a_1,q,\delta)>0$ and $P(a_3,q,\delta)>0$. Moreover
$$P(a_2,q,\delta)=P(a_3,q,\delta)+q(a_2-a_3)=P(a_1,q,\delta)+q(a_2-a_1)$$
and since $a_2-a_3$ and $a_2-a_1$ have opposing signs it follows that $P(a_2,q,\delta)>0$. Thus $\dim_H L_{a_2}>\delta$ and the proof follows.

\end{proof}

\section{Examples}
One example where the Birkhoff spectra has already been studied but gives a nice illustration of the hyperbolic spectra is the Manneville-Pomeau map. Let $0<\gamma<1$ and $T:[0,1]\to [0,1]$ be defined by $T(x)=x+x^{1+\gamma}\mod 1$ if $x\in [0,1)$ and $T(1)=1$. In this case we can take $A=[0,1]$ and for $f:[0,1]\to\R$ we have that $H_p=\{f(0)\}$. 

In this setting $H=[a_{\min},a_{\max}]$ where
$$a_{\min}=\min\left\{\int f\rm{d}\mu:\mu \text{ }T-\text{inv}\right\}$$
and
$$a_{\max}=\max\left\{\int f\rm{d}\mu:\mu \text{ }T-\text{inv}\right\}.$$
We also have $H_p=\{f(0)\}$. In \cite{JJOP} it is shown that $a\to \dim_H L_{a}$ is continuous on $[a_{\min},f(0))$ and $[f(0),a_{\max}]$. In Theorem 6.3 in \cite{IJ2} it is shown it is possible to have  a discontinuity at $f(0)$ where the dimension jumps up (so the spectrum is upper-semi continuous but not lower semi-continuous). Our results  show that in this discontinuous case if we instead consider the hyperbolic spectrum $a\to \dim_H X_{a}\cap D_0$ then the spectrum is lower semi continuous and so $\dim_H L_{f(0)}\cap D_0=\liminf_{a\to f(0)}\dim_H L_{a}<\limsup_{a\to f(0)}\dim_H L_{a}=1$. (Note that it can be the case when $f(0)=a_{\min}$ that $L_{a}\cap D_0=\emptyset$. 

Another example where our results can be applied is the map $T:[0,1]\to [0,1]$ given by $T(x)=x+\gamma x^{1+\epsilon}\mod 1$ where $\gamma>1$ and $\epsilon\in (0,1)$. These maps are conjugate 
to $\beta$-expansions but have similar parabolic behaviour to the Manneville-Pomeau maps. Their thermodynamic properties (in particular uniqueness of certain equilibrium states) are studied in \cite{CT}. Our results can be applied to these maps and again $H_p=\{f(0)\}$ and the hyperbolic dimension is $1$. This means we can fully compute the spectrum both $a\to \dim_H L_{a}$ and $a\to\dim_H L_a\cap D_0$.

\newpage
\bibliography{ref}

\begin{thebibliography}{WW}


\thispagestyle{empty}

\bibitem[BS]{BS}L. Barreira and B. Saussol,
\newblock {Variational principles and mixed multifractal spectra},
\emph{Trans. Amer. Math. Soc.}, 353, no 10, 3919--3944, 2001.

\bibitem[B]{B}
A. Besicovitch, \newblock{On the sum of digits of real numbers represented in the dyadic
   system},
   \emph{Math. Ann.}, 110, no 1, 321--330.1935.
 \bibitem[Bi]{Bi}
 P. Billingsley, \newblock{Hausdorff dimension in probability theory}, \emph{Illinois J. Math.}4 1960 187--209. 
\bibitem[C]{C} Y. Chung,  \newblock{Birkhoff spectra for one-dimensional maps with some hyperbolicity}, \emph{Stoch. Dyn.} 10 (2010), no. 1, 53--75.
\bibitem[CTa]{CTa} Y. Chung and H. Takahasi, \newblock{Multifractal formalism for Benedicks-Carleson quadratic maps}, \emph{Ergodic Theory Dynam. Systems} 34 (2014), no. 4, 1116--1141.
\bibitem[CT]{CT} V. Climenhaga and D. Thompson, \newblock{Equilibrium states beyond specification and the Bowen property}, \emph{J. Lond. Math. Soc.} (2) 87 (2013), no. 2, 401--427.
\bibitem[E]{E}H. Eggleston, \newblock{The fractional dimension of a set defined by decimal properties} ,\emph{Quart. J. Math., Oxford Ser.} 20, (1949). 31--36. 
\bibitem[FFW]{FFW}
A-H Fan, D-J Feng and J. Wu \newblock{Recurrence, dimension and entropy}, \emph{J. London Math. Soc.} (2) 64 (2001), no. 1, 229--244.   
\bibitem[FH]{FH}D-J Feng and W. Huang \newblock{Lyapunov spectrum of asymptotically sub-additive potentials}, \emph{Comm. Math. Phys.} 297 (2010), no. 1, 1?43.  
   
\bibitem[FLW]{FLW}
 D-J. Feng, K-S. Lau and J. Wu,
\newblock{Ergodic limits on the conformal repellers},
   \emph{Adv. Math.}, 169, no.1, 58--91,2002.
\bibitem[GPR]{GPR}
K. Gelfert, F. Przytycki and M. Rams,
\newblock{Lyapunov spectrum for multimodal maps},
\emph{Ergodic Theory Dynam. Systems} 36 (2016), no. 5, 1441--1493. 
 
 \bibitem[GR]{GR}
   K. Gelfert and M. Rams , \newblock{The Lyapunov spectrum of some parabolic systems},
  \emph{Ergodic Theory Dynam. Systems}, 29, no. 3, 919--940,2009.

\bibitem[H1]{H1}
F. Hofbauer, \newblock{Local dimension for piecewise monotonic maps on the interval},
 \emph{Ergodic Theory Dynam. Systems}, 15, no 6, 1119--1142,1995.
 
		
\bibitem[H2]{H2}

F. Hofbauer,
   \newblock{Multifractal spectra of Birkhoff averages for a piecewise monotone
   interval map}, \emph{Fund. Math.}, 208,no 2, 95--121,2010.

\bibitem[HR]{HR} F. Hofbauer and P. Raith,  \newblock{The Hausdorff dimension of an ergodic invariant measure for a piecewise monotonic map of the interval}, \emph{Canad. Math. Bull} 35 (1992), no. 1, 84--98.


\bibitem[HU]{HU}
F. Hofbauer,and M. Urba\'nski 
\newblock{Fractal properties of invariant subsets for piecewise monotonic maps on the interval},
\emph{Trans. Amer. Math. Soc.} 343 (1994), no. 2, 659--673. 


\bibitem[IJ1]{IJ1}
G. Iommi and T. Jordan, \newblock{Multifractal analysis of Birkhoff averages for countable Markov maps}, \emph{Ergodic Theory Dynam. Systems} 35 (2015), no. 8, 2559--2586.
\bibitem[IJ2]{IJ2}
G. Iommi and T. Jordan, 
\newblock{Multifractal analysis for quotients of Birkhoff sums for countable Markov maps}, \emph{Int. Math. Res. Not.} 2015, no. 2, 460--498.
\bibitem[IT]{IT}
G. Iommi and M. Todd, \newblock{Dimension theory for multimodal maps}, \emph{Ann. Henri Poincar\'e}12 (2011), no. 3, 591--620.

\bibitem[JJOP]{JJOP}
A. Johansson, T. Jordan, A. \"{O}berg, M. Pollicott, \newblock{Multifractal analysis of non-uniformly hyperbolic systems}, \emph{Israel J. Math.} 177 (2010), 125--144.
\bibitem[KS]{KS}
 M. Kesseböhmer and B. Stratmann, \newblock{A multifractal analysis for Stern-Brocot intervals, continued fractions and Diophantine growth rates}, J. Reine Angew. Math. 605 (2007), 133--163.
\bibitem[K]{K}
V. Knichal, \newblock{Dyadische Entwicklungen und Hausdorffsches Mass}, \emph{ Mem. Soc. Roy . Sci . Boheme , Cl . Sci . Nr . XIV}  (1933) .
\bibitem[N]{N} K. Nakaishi. \newblock{Multifractal formalism for some parabolic maps}, \emph{Ergod. Th. Dynam. Sys.} 20, 843--857, 2000
\bibitem[Oli]{Oli} E. Olivier,  \newblock{Structure multifractale d'une dynamique non expansive définie sur un ensemble de Cantor (French) [Multifractal structure of a nonexpanding dynamics defined on a Cantor set]}, \emph{C. R. Acad. Sci. Paris Sér. I Math.} 331 (2000), no. 8, 605?610.
\bibitem[O]{O}
L. Olsen, \newblock{Multifractal analysis of divergence points of deformed measure theoretical Birkhoff averages}, \emph{J. Math. Pures Appl}, (9) 82 (2003), no. 12, 1591--1649.
\bibitem[PW]{PW}
Y. Pesin and H. Weiss, \newblock{The multifractal analysis of Birkhoff averages and large deviations},\emph{Global analysis of dynamical systems, } 419--431, Inst. Phys., Bristol, 2001.
\bibitem[TV]{TV}F. Takens and E. Verbitskiy, \newblock{On the variational principle for the topological entropy of certain non-compact sets}, \emph{Ergodic Theory Dynam. Systems} 23 (2003), no. 1, 317--348.


\bibitem[W]{W}  H. Weiss, \newblock{The Lyapunov spectrum for conformal expanding maps and axiom-A surface diffeomorphisms}, \emph{J. Statist. Phys.} 95 (1999), no. 3-4, 615?-632.




\end{thebibliography}

\end{document}